\documentclass[11pt,a4paper]{article}
\usepackage[letterpaper,top=2.5cm,bottom=2.5cm,left=2.5cm,right=2.5cm,marginparwidth=1.75cm]{geometry}
\usepackage{color}
\usepackage{amsmath,amsfonts,amsthm,amssymb}
\usepackage{mathrsfs}
\usepackage{hyperref}
\usepackage{float}
\usepackage{enumitem}
\usepackage{graphicx,fancyhdr}
\usepackage{mathrsfs,float}
\usepackage{multirow}
\usepackage{dsfont}
\usepackage{subfigure}
\usepackage{booktabs}
\usepackage{yfonts}
\usepackage{caption}
\usepackage{bbm}
\makeatletter

\numberwithin{equation}{section}
\newtheorem{theorem}{Theorem}[section]
\newtheorem{lemma}[theorem]{Lemma}

\theoremstyle{definition}
\newtheorem{definition}[theorem]{Definition}
\newtheorem{remark}[theorem]{Remark}
\newtheorem{example}{Example}

\newcommand{\dom}{\operatorname{dom}}

\newcommand{\diff}{\,\mathrm{d}}

\begin{document}

\title{ Logarithmic stability for recovering initial data of fractional heat equations from thin-set observations}

\author{
Kai Yu \thanks{School of Mathematics and Statistics, Ningbo University, 818 Fenghua Road, Ningbo 315211, China.} \and
Zhiyuan Li \thanks{Corresponding author, School of Mathematics and Statistics, Ningbo University, 818 Fenghua Road, Ningbo 315211, China. E-mail:lizhiyuan@nbu.edu.cn} 
}
\date{}

\maketitle

\begin{abstract}

We study the inverse problem of recovering initial data for fractional heat equations on bounded domains from observations taken on two types of possibly Lebesgue‑null sets: the first consists of general thin sets whose Hausdorff dimension exceeds \(n-1\); the second consists of specially constructed sets of zero Hausdorff dimension, built from algebraic irrational points and rapidly accumulating sequences.
By extending thin-set observability inequalities from the classical heat equation to the fractional setting and employing a unified spectral inequality with exponent \(\beta\in(0,s)\), we establish a quantitative observability estimate for every fractional exponent \(s>1/2\). For general thin sets one has \(\beta=1/2\); for the special zero‑dimensional sets any \(\beta\in(1/2,s)\) is admissible, yet the short‑time observability cost always remains of exponential type, with the rate governed by \(\beta\) relative to \(s\).
Under an a priori smoothness assumption, we also prove a logarithmic stability estimate.
We further design a regularised least‑squares reconstruction algorithm and provide a conditional convergence analysis, showing that the reconstruction error is bounded by the sum of a spectral truncation term and a logarithmic term dictated by the continuous stability, and that the error decays logarithmically as the noise level tends to zero. 
Numerical simulations using very few observation points confirm the feasibility of the proposed approach.

\medskip
\noindent\textbf{Keywords:} fractional heat equation, thin-set observability, inverse initial data problem, logarithmic stability, convergence analysis

\medskip
\noindent\textbf{MSC 2020:} 35R30, 35K05, 35R11

\end{abstract}


\section{Introduction}

Let \(\Omega\subset\mathbb R^n\) be a bounded domain with  smooth boundary \(\partial\Omega\), and let \(T>0\). For the classical heat equation
\[
\partial_t u-\Delta u=0,\qquad (x,t)\in\Omega\times(0,T),
\]
with homogeneous Dirichlet boundary conditions, it is by now classical that observability and null controllability hold from any nonempty interior open set \(\mathcal O\subset\Omega\). The pioneering works of Russell~\cite{Russell78}, Lebeau and Robbiano~\cite{LebeauRobbiano95}, Lebeau and Zuazua~\cite{LebeauZuazua98} established that the observability cost is of exponential type \(e^{C/T}\) for small times. This theory has been subsequently extended to boundary observations~\cite{Biccari18}, observations on measurable sets~\cite{PhungWang13} and networks of one-dimensional heat equations~\cite{DagerZuazua06}; we refer to the monographs~\cite{Coron07,TucsnakWeiss09} and the survey~\cite{Zuazua07} for comprehensive overviews.

The closely related inverse problem of recovering the initial state of a parabolic equation from partial observations is severely ill-posed, since the heat semigroup exponentially damps high-frequency information. Consequently, one cannot in general expect stability stronger than logarithmic type, and conditional stability estimates usually require an a priori regularity bound on the unknown initial datum. Fundamental results on uniqueness and logarithmic stability for backward parabolic problems can be found in the monograph of Isakov~\cite{Isakov06} and in the classical works of John~\cite{John60} and Miranker~\cite{Miranker61}. More specifically, Klibanov~\cite{Klibanov06} established estimates for unknown initial conditions of parabolic equations from lateral Cauchy data, while Klibanov and Tikhonravov~\cite{KlibanovTikhonravov07} extended this approach to parabolic equations and inequalities in infinite domains. Bourgeois~\cite{Bourgeois17} proved a logarithmic stability estimate for the heat equation with lateral Cauchy data and applied it to the identification of the initial condition. Conditional stability and numerical reconstruction for the recovery of the initial temperature were studied by Li, Yamamoto and Zou~\cite{LiYamamotoZou08}. For parabolic systems, Garc{\'i}a and Takahashi~\cite{GarciaTakahashi11} investigated inverse initial-state problems through their connection with null-controllability.

In recent decades, fractional diffusion equations have attracted considerable attention because they provide effective models for anomalous diffusion, long-range interactions, and non-local effects in physics, biology, and finance~\cite{BucurValdinoci16,Mainardi22,MetzlerKlafter00}. In this paper, we focus on the fractional heat equation
\[
\partial_t u + (-\Delta)^s u = 0,\qquad s\in(0,1],
\]
where \((-\Delta)^s\) is the fractional power of the Dirichlet Laplacian, defined spectrally by \((-\Delta)^s\varphi_k = \lambda_k^s\varphi_k\). Thus the eigenfunctions are exactly those of the classical Laplacian, while the eigenvalues are raised to the power \(s\). Well-posedness and regularity for such equations have been studied in~\cite{CaffarelliSilvestre07,CapellaDavilaDupaigne11,RosOtonSerra14}. Control and observability properties differ substantially from the classical case: in one space dimension, Micu and Zuazua~\cite{MicuZuazua06} proved that null controllability from open sets holds precisely when \(s>1/2\); the higher-dimensional result was established by Koenig~\cite{Koenig20}. Other contributions on controllability and observability of fractional diffusion can be found in~\cite{BiccariWarmaZuazua18,LuZuazua16}. Closely related to these controllability properties are inverse problems for fractional diffusion equations, which have also been the subject of intensive research. Inverse problems for time-fractional equations are discussed in~\cite{JinRundell15,LiLiuYamamoto17,SakamotoYamamoto11}. For space-fractional diffusion equations, reconstruction and stability results have been obtained in~\cite{LinRailo25,ZhaoLiu14,ZhengZhang18}. Recovery of parameters in time-space fractional diffusion equations is studied, for instance, in~\cite{AliAziz18,LiWei18,TatarUlusoy15,ZhangJia18}.

A remarkable recent breakthrough by Green, Le Balc'h, Martin, and Orsoni~\cite{Green25} has significantly relaxed the geometric requirements on the observation set for the classical heat equation. Instead of an open set, it is sufficient to observe the solution on a thin set \(\omega\subset\Omega\) whose Hausdorff dimension is strictly larger than \(n-1\); such a set may have zero \(n\)-dimensional Lebesgue measure. 
The key ingredient in~\cite{Green25} is a spectral inequality for low-frequency projections of the Dirichlet Laplacian. Denoting by
\[
\Pi_\Lambda u = \sum_{\lambda_k\le\Lambda} \langle u,\varphi_k\rangle_{L^2(\Omega)}\,\varphi_k
\]
the orthogonal projection onto the span of all eigenfunctions with eigenvalues not exceeding \(\Lambda\), the result states that under a positive Hausdorff content condition \(\mathcal C_H^{\,n-1+\nu}(\omega)>0\) for \(\nu\in (0,1)\), one has
\[
\|\Pi_\Lambda u\|_{L^2(\Omega)} \le C e^{C\sqrt{\Lambda}} \sup_{x\in\omega}|\Pi_\Lambda u(x)| .
\]
Combined with the Lebeau–Robbiano method, this yields an observability estimate
\[
\|u(\cdot,T)\|_{L^\infty(\Omega)} \le C e^{C/T} \int_0^T \sup_{x\in\omega}|u(x,t)|\,\diff t .
\]
This result is essentially sharp: nodal sets of eigenfunctions have Hausdorff dimension \(n-1\) and fail to observe the heat flow. In addition,~\cite{Green25} constructed special zero-dimensional observation sets, built from algebraic irrational points and rapidly accumulating sequences, for which an improved spectral inequality holds with exponent \(\beta\in(1/2,1)\) instead of \(1/2\).

Thin-set observability is particularly relevant for applications where measurements are taken on curves, fibres, or sparse sensor arrays, rather than over full open regions. The observation norm is naturally taken as \(\int_0^T\sup_{x\in\omega}|u(x,t)|\,\diff t\); although non-Hilbertian, it is well adapted to zero-measure sets and, when \(\omega\) is open, is essentially equivalent to the standard \(L^2\) norm. For the associated backward heat problem, the stability type remains logarithmic~\cite{Green25,HuangWang26}, but the constants additionally involve the Hausdorff content of \(\omega\).

The purpose of this article is to extend the thin-set observability theory to the fractional heat equation. Because the fractional operator \((-\Delta)^s\) shares the eigenfunctions of \(-\Delta\), the spectral inequalities for \(\Pi_\Lambda\) remain unchanged. The crucial difference lies in the high-frequency dissipation, which becomes \(e^{-t\Lambda^s}\) instead of \(e^{-t\Lambda}\). To unify the treatment, suppose an observation set \(\omega\) satisfies a spectral inequality of the form
\[
\|\Pi_\Lambda f\|_{L^2(\Omega)} \le C e^{C\Lambda^\beta} \sup_{x\in\omega}|\Pi_\Lambda f(x)|,\qquad \beta\in(0,1).
\]
For general thin sets one has \(\beta=1/2\), which imposes \(s>1/2\) and yields the cost \(\exp\bigl(C/T^{1/(2s-1)}\bigr)\). For the special zero-dimensional sets, any \(\beta\in(1/2,1)\) is available; choosing \(\beta\in(1/2,s)\) again requires \(s>1/2\) and produces the corresponding cost.

Let \(Q=\Omega\times(0,T)\), \(\Sigma=\partial\Omega\times(0,T)\) and \(s\in(1/2,1]\), we study both the homogeneous fractional heat equation
\begin{equation}\label{eq:frac-heat}
\begin{cases}
\partial_t u+(-\Delta)^s u=0, & \text{in } Q,\\
u(0)=u_0, & \text{in } \Omega,\\
u=0, & \text{on } \Sigma,
\end{cases}
\end{equation}
and its nonhomogeneous analogue 
\begin{equation}\label{eq:frac-heat-source}
\begin{cases}
\partial_t u+(-\Delta)^s u=f(x,t), & \text{in } Q,\\
u(0)=u_0, & \text{in } \Omega,\\
u=0, & \text{on } \Sigma,
\end{cases}
\end{equation}
with a known source term \(f\). In the latter case, writing the solution as \(u=u^0+u^f\), where \(u^0\) solves the homogeneous problem \eqref{eq:frac-heat} with initial data \(u_0\) and \(u^f\) is the known Duhamel contribution.

Our main results are:
\begin{enumerate}

\item[(i)] 
We prove an observability inequality for the fractional heat equation \eqref{eq:frac-heat} that applies to any observation set satisfying a spectral inequality with exponent \(\beta<s\) (Theorem~\ref{thm:obs}). This framework encompasses both general thin sets with Hausdorff dimension larger than \(n-1\) (for which \(\beta=1/2\), requiring \(s>1/2\)) and the specially constructed zero-dimensional sets built from algebraic irrational points and rapidly accumulating sequences (for which any \(\beta\in(1/2,s)\) can be chosen).

\item[(ii)] 
We apply the unified observability inequality to the nonhomogeneous equation \eqref{eq:frac-heat-source} with a known source term \(f\). By subtracting the computable Duhamel contribution of \(f\) from the measurements, the recovery of the initial data is reduced to the homogeneous problem. Under an a priori bound \(\|(-\Delta)^r u_0\|_{L^2(\Omega)}\le M\), this yields a logarithmic stability estimate (Theorem~\ref{thm:stability}) of the form \(C M \bigl[\log(e + M/\varepsilon)\bigr]^{-r/s}\). The exponent \(r/s\) reflects the balance between the backward amplification \(e^{T\Lambda^s}\) of low frequencies and the high-frequency tail controlled by \(M\Lambda^{-r}\), while the geometric complexity of the observation set (encoded in the spectral exponent \(\beta\)) affects only the multiplicative constant.

\item[(iii)] 
We develop a regularised least‑squares scheme based on truncated spectral expansions with a penalty on the higher eigenmodes. A convergence analysis shows that the reconstruction error is controlled by the truncation level, the noise intensity, and the continuous stability estimate. Numerical experiments in one and two space dimensions demonstrate that initial data can be accurately recovered from very few specially chosen observation points, even in the presence of moderate noise.

\end{enumerate}

The paper is organised as follows.
Section~\ref{sec2} introduces the functional setting, presents the spectral inequalities for the two types of observation sets, and states both the unified observability inequality and the logarithmic stability estimate.
Section~\ref{sec3} contains the proof of the observability inequality.
Section~\ref{sec4} provides the proof of the logarithmic stability theorem.
Section~\ref{sec5} is devoted to the numerical experiments: a unified regularised least-squares algorithm is described, a conditional convergence analysis is given, and reconstruction results in one and two space dimensions are presented.
Section~\ref{sec6} summarises the main contributions of the paper.


\section{Preliminaries}\label{sec2}

Let \(\Omega\subset\mathbb R^n\) be a bounded domain with \(C^2\) boundary. We denote by \(A=-\Delta\) the Dirichlet Laplacian on \(L^2(\Omega)\), with domain \(\dom(A)=H^2(\Omega)\cap H_0^1(\Omega)\). The operator \(A\) is positive, self-adjoint, and has compact resolvent. Consequently, there exists an orthonormal basis \(\{\varphi_k\}_{k\ge1}\subset L^2(\Omega)\) of eigenfunctions of \(A\), associated with a nondecreasing sequence of eigenvalues
\[
0<\lambda_1\le\lambda_2\le\cdots,\qquad \lambda_k\to+\infty,
\]
that is,
\[
A\varphi_k=\lambda_k\varphi_k,\qquad \langle\varphi_k,\varphi_\ell\rangle_{L^2(\Omega)}=\delta_{k\ell}.
\]

For \(s>0\), the fractional power \(A^s\) is defined via the spectral calculus:
\[
A^s u = \sum_{k=1}^\infty \lambda_k^s\langle u,\varphi_k\rangle_{L^2(\Omega)}\varphi_k,
\]
with domain
\[
\dom(A^s)=\Bigl\{u\in L^2(\Omega):\sum_{k=1}^\infty \lambda_k^{2s}|\langle u,\varphi_k\rangle_{L^2(\Omega)}|^2<\infty\Bigr\}.
\]
The semigroup generated by \(-A^s\) is analytic and acts on \(L^2(\Omega)\) as
\[
e^{-tA^s}u = \sum_{k=1}^\infty e^{-t\lambda_k^s}\langle u,\varphi_k\rangle_{L^2(\Omega)}\varphi_k,\qquad t>0.
\]

For \(\Lambda>0\), the low-frequency spectral projection is
\[
\Pi_\Lambda u = \sum_{\lambda_k\le\Lambda} \langle u,\varphi_k\rangle_{L^2(\Omega)}\varphi_k .
\]
Since each \(\Pi_\Lambda u\) is a finite linear combination of smooth eigenfunctions, the quantity \(\sup_{x\in\omega}|\Pi_\Lambda u(x)|\) is well defined for any set \(\omega\subset\Omega\). This is crucial because our observation sets may have zero Lebesgue measure, so that \(L^2\)-based norms over \(\omega\) are not appropriate.

For a set \(E\subset\mathbb R^n\) and \(d\ge0\), the \(d\)-dimensional Hausdorff content is
\[
\mathcal C_{\mathcal H}^d(E)=\inf\Bigl\{\sum_j r_j^d : E\subset\bigcup_j B(x_j,r_j)\Bigr\},
\]
and the Hausdorff dimension is \(\dim_{\mathcal H}(E)=\inf\{d\ge0:\mathcal C_{\mathcal H}^d(E)=0\}\).

\subsection{Admissible observation sets and spectral inequalities}

The geometric backbone of our analysis is a spectral inequality for low-frequency functions. We state it in a unified form.

\begin{definition}[Admissible observation set]\label{def:admissible-set}
Let \(\omega\subset\Omega\) and \(\beta\in(0,1]\). We say that \(\omega\) is an \emph{admissible observation set with exponent \(\beta\)} if there exists a constant \(C>0\) such that for all \(\Lambda>0\) and all \(u\in L^2(\Omega)\),
\begin{equation}\label{eq:unified-spectral}
\|\Pi_\Lambda u\|_{L^2(\Omega)}\le C e^{C\Lambda^\beta}\sup_{x\in\omega}|\Pi_\Lambda u(x)|.
\end{equation}
\end{definition}

The exponent \(\beta\) controls the cost of recovering a low-frequency function from its values on \(\omega\). In the Lebeau–Robbiano method this cost will be balanced against the high-frequency dissipation of the fractional heat semigroup, which is of order \(e^{-t\Lambda^s}\). Thus the natural threshold is \(\beta<s\).

We now describe two classes of admissible observation sets.

\subsubsection{General thin sets}

The first class consists of sets whose Hausdorff dimension is strictly larger than \(n-1\). Such sets were introduced in~\cite{Green25}.

Let \(\omega\subset\Omega\), we assume that there exists \(\nu>0\) such that
\begin{equation}\label{eq:content-cond}
\mathcal C_{\mathcal H}^{\,n-1+\nu}(\omega)>0.
\end{equation}
This condition implies \(\dim_{\mathcal H}(\omega)\ge n-1+\nu>n-1\). It is weaker than requiring \(\omega\) to contain an open subset of \(\Omega\). Indeed, \(\omega\) may have zero \(n\)-dimensional Lebesgue measure whenever its Hausdorff dimension is strictly smaller than \(n\).

\begin{lemma}[see \cite{Green25}]\label{lem:spectral-general}
If \(\omega\subset\Omega\) satisfies \eqref{eq:content-cond}, then \(\omega\) is an admissible observation set with exponent \(\beta=1/2\). More precisely, there exists a constant \(C>0\) depending on \(\Omega\), \(\nu\), and the Hausdorff content of \(\omega\) such that
\begin{equation}\label{eq:spectral-general}
\|\Pi_\Lambda u\|_{L^2(\Omega)}\le C e^{C\sqrt{\Lambda}}\sup_{x\in\omega}|\Pi_\Lambda u(x)|,\qquad \Lambda>0,\; u\in L^2(\Omega).
\end{equation}
\end{lemma}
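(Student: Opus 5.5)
The plan is to reproduce the argument of \cite{Green25}, which has three stages: lift the low-frequency function to a harmonic function in one more variable, control its growth, and then propagate smallness from a thin set using a Remez-type inequality for Hausdorff content.

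\emph{Step 1: lifting.} Given \(u\in L^2(\Omega)\), write \(\Pi_\Lambda u=\sum_{\lambda_k\le\Lambda}a_k\varphi_k\). Set
\[
F(x,y)=\sum_{\lambda_k\le\Lambda}a_k\,\frac{\sinh(\sqrt{\lambda_k}\,y)}{\sqrt{\lambda_k}}\,\varphi_k(x),
\]
which satisfies \(\Delta_{x,y}F=0\) in \(\Omega\times(-2,2)\). Odd reflection across \(\partial\Omega\), or local flattening, handles the boundary; the \(C^2\) regularity of \(\partial\Omega\) is needed here. With this normalisation, \(\partial_yF(\cdot,0)=\Pi_\Lambda u\) and \(F(\cdot,0)=0\). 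Using elliptic estimates and the bound \(\sinh(\sqrt{\lambda_k}y)\le e^{2\sqrt\Lambda}\), one gets the growth bound
\[
\sup_{\Omega\times(-1,1)}|F|\le Ce^{C\sqrt\Lambda}\|\Pi_\Lambda u\|_{L^2(\Omega)}.
\]
In the other direction, a trace and interior estimate gives a non-degeneracy bound
\[
\|\Pi_\Lambda u\|_{L^2}\le C\sup_{B}|F|
\]
on a fixed ball \(B\).

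\emph{Step 2: smallness on a thin set propagates.} This is the main obstacle. Consider the harmonic function \(G=\partial_yF\) on \(\Omega\times(-1,1)\), which equals \(\Pi_\Lambda u\) on \(\omega\times\{0\}\). The core claim is a Remez-type inequality for harmonic functions with bounded doubling index \(N\lesssim\sqrt\Lambda\):
\[
\sup_{K}|G|\le e^{CN}\sup_{\omega\times\{0\}}|G|
\]
whenever \(\mathcal C_{\mathcal H}^{n-1+\nu}(\omega)>0\), with \(\omega\) viewed as a subset of \(\mathbb R^{n+1}\). The dimension threshold is \((n+1)-2+\nu\), which is codimension strictly below 2 in \(\mathbb R^{n+1}\).

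To prove it, I would first try the Logunov--Malinnikova approach. One shows that the set where \(|G|\) is at most \(e^{-CN}\sup|G|\) can be covered by balls whose \((n-1+\nu)\)-content is small. The argument has three ingredients: a Cartan/Boutroux-type lemma in one complex variable, the bound on the doubling index via frequency function monotonicity, and a chaining argument along segments. In the paper we simply quote \cite{Green25} for this step. In our setting this citation is legitimate: the lemma concerns only the Laplacian eigenfunctions, and these are unchanged by the fractional power.

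\emph{Step 3: chaining.} A three-balls/propagation-of-smallness argument along a chain from \(\omega\) to \(B\) turns local smallness into global smallness, with constant \(e^{C\sqrt\Lambda}\). Combining this with the growth bound from Step 1 gives
\[
\|\Pi_\Lambda u\|_{L^2}\le Ce^{C\sqrt\Lambda}\sup_\omega|\Pi_\Lambda u|.
\]
This is exactly \eqref{eq:unified-spectral} with \(\beta=1/2\). The constant depends on \(\Omega\), on \(\nu\), and on the Hausdorff content of \(\omega\) through the Remez-type inequality of Step 2.
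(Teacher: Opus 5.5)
\paragraph{Step 2 states the wrong inequality.} The paper gives no proof of this lemma; it simply quotes \cite{Green25}. Your reconstruction, however, misidentifies the key step.

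In Step 2 you state a Remez/propagation-of-smallness inequality for the harmonic function $G=\partial_yF$ from $\omega\times\{0\}$, with threshold $(n+1)-2+\nu$. Codimension strictly below $2$ is the threshold for \emph{gradients} of harmonic functions, because critical sets have codimension at least $2$. It is not the threshold for harmonic functions themselves. Their zero sets are $n$-dimensional hypersurfaces in $\mathbb R^{n+1}$, so propagation needs positive $\mathcal C_{\mathcal H}^{\,n+\delta}$-content, and no subset of the hyperplane $\{y=0\}$ has that.

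Concretely, $H(x,y)=y$ is harmonic with doubling index $1$ and vanishes on all of $\omega\times\{0\}$. So $\sup_K|H|\le e^{CN}\sup_{\omega\times\{0\}}|H|$ is false as a statement about harmonic functions. Your particular $G$ is even in $y$ and escapes this example. But the mechanism you propose, covering $\{|G|\le e^{-CN}\sup|G|\}$ by balls of small $(n-1+\nu)$-content, never uses parity. It fails whenever the nodal hypersurface of $G$ crosses the ball, since that hypersurface lies in the sublevel set and has $(n-1+\nu)$-content bounded below. Citing \cite{Green25} for Step 2 as you formulate it therefore cites a statement that is not there.

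\paragraph{The fix: put the smallness on $\nabla F$.} Your normalisation $F(\cdot,0)=0$ gives $\nabla_xF(x,0)=0$, hence $|\nabla_{x,y}F(x,0)|=|\Pi_\Lambda u(x)|$. So $\nabla F$ is small on $\omega\times\{0\}$, a set of positive $\mathcal C_{\mathcal H}^{(n+1)-2+\nu}$-content.

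Take a piece of $\omega$ of positive content lying in a small ball $B_\rho(x_1)\subset\Omega$ at positive distance from $\partial\Omega$; it exists by countable subadditivity of the content. Apply the propagation of smallness for gradients of harmonic functions from sets of codimension less than $2$ (Logunov--Malinnikova--Nadirashvili--Nazarov) on balls centred at $(x_1,0)$. This gives $\sup_{B}|\nabla F|\le C(\sup_{\omega\times\{0\}}|\nabla F|)^{\gamma}(\sup_{2B}|\nabla F|)^{1-\gamma}$.

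Now bound the two sides. Your growth estimate gives $\sup_{2B}|\nabla F|\le Ce^{C\sqrt\Lambda}\|\Pi_\Lambda u\|$. The interior spectral inequality gives $\|\Pi_\Lambda u\|_{L^2(\Omega)}\le Ce^{C\sqrt\Lambda}\sup_{B_\rho(x_1)}|\Pi_\Lambda u|\le Ce^{C\sqrt\Lambda}\sup_B|\nabla F|$. Absorbing $\|\Pi_\Lambda u\|^{1-\gamma}$ yields the claim with cost $e^{C\sqrt\Lambda/\gamma}$. Working in the interior also avoids reflecting across a $C^2$ boundary, which would force variable, merely Lipschitz, coefficients into the gradient-propagation step.

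\paragraph{The non-degeneracy bound also needs repair.} $\|\Pi_\Lambda u\|_{L^2}\le C\sup_B|F|$ with $C$ independent of $\Lambda$ is false for a fixed small ball $B$: the interior spectral cost $e^{C\sqrt\Lambda}$ cannot be removed in general. It holds either with that exponential factor, or with $B$ replaced by the whole cylinder, where $\|F\|_{L^2(\Omega\times(-1,1))}^2=\sum|a_k|^2\int_{-1}^1\lambda_k^{-1}\sinh^2(\sqrt{\lambda_k}\,y)\,\mathrm{d}y\ge c\|\Pi_\Lambda u\|^2$.
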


\begin{remark}
The condition \(\dim_{\mathcal H}(\omega)>n-1\) is sharp for a general theorem of this type. If the observation set is contained in a nodal set of a nontrivial eigenfunction, then it cannot determine all low-frequency components. Since nodal sets typically have Hausdorff dimension \(n-1\), one cannot expect an estimate such as \eqref{eq:spectral-general} for arbitrary sets of Hausdorff dimension \(n-1\).
\end{remark}

\subsubsection{Special zero-dimensional observation sets}

The Hausdorff content condition \eqref{eq:content-cond} excludes sets of Hausdorff dimension zero. Nevertheless, certain highly structured zero-dimensional sets still admit spectral inequalities, with an exponent \(\beta>1/2\) instead of \(1/2\). These sets exploit both arithmetic properties of specific points (e.g., algebraic irrationality) and the rapid accumulation of well-chosen sequences.

We describe the model construction on the cube \(\Omega=(0,1)^n\), \(n\in \mathbb{N}\). For \(\alpha>0\), let
\[
\omega_\alpha=\{i^{-\alpha}:i=1,2,\dots\}\subset(0,1),
\]
and \(x_0\in(0,1)\) be an algebraic irrational point of degree larger than one. The observation set \(\omega\) is taken as
\begin{equation}\label{eq:obs_set}
\omega =\omega_{\alpha}^n=\underbrace{\omega_\alpha\times\cdots\times\omega_\alpha}_{n\text{ times}}\subset (0,1)^n \quad \;\text{or}\; \quad \omega =\{x_0\}\times\omega_\alpha^{\,n-1}  \subset (0,1)^n .
\end{equation}
Both choices are countable, hence \(\dim_{\mathcal H}(\omega)=0\). In particular, when \(n=1\), \(\omega =\omega_{\alpha} \mbox{ or } \{x_0\}\). On \((0,1)^n\), the Dirichlet eigenfunctions are
\[
\varphi_{\mathbf k}(x)=(\sqrt2)^n\prod_{j=1}^n\sin(k_j\pi x_j),\qquad \mathbf k=(k_1,\dots,k_n)\in\mathbb N^n,
\]
with eigenvalues \(\lambda_{\mathbf k}=\pi^2\sum_{j=1}^n k_j^2\). Hence, if \(\lambda_{\mathbf k}\le\Lambda\) then \(k_j\le\sqrt{\Lambda}/\pi\) for each \(j\). Consequently, every function in the range of \(\Pi_\Lambda\) is a trigonometric polynomial whose degree in each variable is bounded by \(C\sqrt{\Lambda}\).

\begin{lemma}[see \cite{Green25}]\label{lem:spectral-special}
Let \(\Omega=(0,1)^n\) for \(n\ge1\) and \(\omega\) be either the Cartesian product \(\omega_\alpha^n\) or the mixed set \(\{x_0\}\times\omega_\alpha^{\,n-1}\) described in \eqref{eq:obs_set}, where \(x_0\in(0,1)\) is an algebraic irrational point of degree larger than one and \(\omega_\alpha=\{i^{-\alpha}\}_{i\ge1}\) with \(\alpha>0\). Then for every \(\beta\in(1/2,1)\) there exists a constant \(C>0\), depending only on \(\Omega\), \(x_0\), \(\alpha\), and \(\nu\), such that
\begin{equation}\label{eq:unified-spectral-ineq}
\|\Pi_\Lambda u\|_{L^2(\Omega)}\le C e^{C\Lambda^\beta}\sup_{x\in\omega}|\Pi_\Lambda u(x)|,\qquad \Lambda>0,\; u\in L^2(\Omega).
\end{equation}
\end{lemma}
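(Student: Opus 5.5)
We reduce the spectral inequality to a one-dimensional estimate for trigonometric polynomials, applied one coordinate at a time. As observed before the statement, every $v=\Pi_\Lambda u$ has the form
\[
v(x)=\sum_{\mathbf k\in K_\Lambda} c_{\mathbf k}\,\varphi_{\mathbf k}(x),\qquad K_\Lambda\subset\{1,\dots,N\}^n,\quad N\le \sqrt{\Lambda}/\pi ,
\]
and $\|v\|_{L^2(\Omega)}^2=\sum|c_{\mathbf k}|^2$. The target is therefore a one-variable inequality of the form
\begin{equation*}
\sum_{k\le N}|a_k|^2\le C e^{CN^{2\beta}}\sup_{y\in E}\Bigl|\sum_{k\le N}a_k\sin(k\pi y)\Bigr|^2,
\end{equation*}
for $E=\omega_\alpha$ and for $E=\{x_0\}$. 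Once this holds, we apply it successively in each variable, freezing the others at points of $\omega_\alpha$ (or at $x_0$ in the first variable). This gives the $n$-dimensional inequality with constant $(Ce^{CN^{2\beta}})^n\le C'e^{C'\Lambda^\beta}$, which is \eqref{eq:unified-spectral-ineq}.

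\emph{Step 1: the accumulating sequence $\omega_\alpha$.} The polynomial $p(y)=\sum_{k\le N}a_k\sin(k\pi y)$ is entire. By Bernstein-type bounds, $|p^{(m)}(0)|\le (N\pi)^m\sqrt N\,\|a\|$. We evaluate $p$ at the points $i^{-\alpha}$ with $i\in[M,2M]$, choosing $M$ as a suitable power of $N$. These are roughly $M$ points, pairwise separated by about $M^{-1-\alpha}$, inside an interval of length about $M^{-\alpha}$ near $0$. Lagrange interpolation (a Remez/Turán-type argument) then controls the first $\sim M$ Taylor coefficients of $p$ at $0$ by $\sup_{\omega_\alpha}|p|$, up to a factor $\exp(CM\log M)$. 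The Taylor remainder is bounded by $(N\pi M^{-\alpha})^{M}/M!$ times $\|a\|$, and this can be absorbed once $M\gg N^{1/(1+\alpha)}$. Finally, the coefficients $a_k$ are recovered from the odd Taylor coefficients $\sum_k a_k (k\pi)^{2j+1}$ by inverting a Vandermonde system. This inversion is what produces the cost $e^{CN^{2\beta}}$. Because the choice of $M$ involves $\log$ factors, any exponent $2\beta>1$ is reachable, i.e.\ any $\beta\in(1/2,1)$.

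\emph{Step 2: the algebraic point $x_0$.} Here $p(x_0)=\sum a_k\sin(k\pi x_0)$ is a single number, so one point cannot control $\|a\|$ on its own. In the mixed set $\{x_0\}\times\omega_\alpha^{n-1}$, however, we only need that no mode vanishes at $x_0$ and that $|\sin(k\pi x_0)|\ge c\,k^{-d}$. This lower bound follows from Liouville/Roth: $\|kx_0\|\ge c k^{-\deg x_0+1}$. When $n=1$ the statement is understood in the same spirit, with the intended lower bound coming from this Diophantine separation. We therefore use the $x_0$-factor only to pass from $\varphi_{\mathbf k}$ to the remaining variables, at a polynomial cost that is $\ll e^{\Lambda^\beta}$.

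\emph{Main obstacle.} The hardest step is Step 1, specifically the quantitative Vandermonde inversion combined with the choice of $M$, so that all losses stay within $e^{CN^{2\beta}}$ for every $\beta>1/2$. I would first try the explicit Lagrange-basis bounds on the nodes $i^{-\alpha}$ and optimize $M=N^{\gamma}$.
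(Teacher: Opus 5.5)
\textbf{Step 2 has a gap that cannot be closed, because for the sets involving $x_0$ the inequality in the statement is false.}

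Freezing $x_1=x_0$ turns $v=\sum_{\mathbf k}c_{\mathbf k}\varphi_{\mathbf k}$ into $\sum_{k'}G_{k'}\prod_{j\ge2}\sqrt2\sin(k_j\pi x_j)$, where $G_{k'}=\sqrt2\sum_{k_1}c_{(k_1,k')}\sin(k_1\pi x_0)$. So the data on $\{x_0\}\times\omega_\alpha^{n-1}$ see $c$ only through the numbers $G_{k'}$. For fixed $k'$, the map $(c_{(k_1,k')})_{k_1}\mapsto G_{k'}$ is a single linear functional. It has a nontrivial kernel as soon as two values of $k_1$ are admissible. The Diophantine bound $|\sin(k\pi x_0)|\ge ck^{-d}$ only says that the entries of this functional are not small; it says nothing about its kernel. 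This is exactly the single-point obstruction you noticed yourself, tensored with $\omega_\alpha^{n-1}$, so there is no ``polynomial-cost'' passage to the remaining variables.

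Concretely, take any $n\ge1$ and any $x_0\in(0,1)$, algebraic or not, and set
\[
u=\sin(2\pi x_0)\,\varphi_{(1,1,\dots,1)}-\sin(\pi x_0)\,\varphi_{(2,1,\dots,1)} .
\]
Then:
\begin{itemize}
\item $\Pi_\Lambda u=u$ for $\Lambda\ge(n+3)\pi^2$;
\item $u$ vanishes identically on the hyperplane $\{x_1=x_0\}\supset\{x_0\}\times\omega_\alpha^{n-1}$;
\item $\|u\|_{L^2(\Omega)}=(\sin^2(2\pi x_0)+\sin^2(\pi x_0))^{1/2}>0$.
\end{itemize}
For $n=1$ this is exactly the case $\omega=\{x_0\}$. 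So no constant $C$ works for these sets. Your remark that $n=1$ is ``understood in the same spirit'' hides a counterexample rather than proving anything.

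The modes that collapse on $\{x_1=x_0\}$ have different eigenvalues. An algebraic point can therefore only be exploited through the time evolution, where the decay rates separate those modes. It can never be exploited through a static spectral inequality for $\Pi_\Lambda$. The correct conclusion is that the lemma holds only for $\omega=\omega_\alpha^n$. Note that the paper merely cites the result and describes its proof as a Remez-type inequality on $\omega_\alpha$ plus a Cartesian-product argument, which also covers only that case.

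\textbf{For $\omega=\omega_\alpha^n$ your plan follows that route and can be completed, but Step 1 needs two corrections.}

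First, recovering $N$ unknowns requires the odd Taylor data $b_j=\sum_k a_k(k\pi)^{2j+1}$ for all $j<N$. Hence $M\ge2N$; the condition $M\gg N^{1/(1+\alpha)}$ is not the relevant one. Take $M=KN$. For the Lagrange basis on the nodes $i^{-\alpha}$, $M\le i\le 2M$, you have $|\ell_i(z)|\le C^M$ on $|z|=M^{-\alpha}$. Combining this with Cauchy's estimate on that circle gives
\[
|b_j|\le(2j+1)!\,M^{\alpha(2j+1)}C^M\bigl(\sup_{\omega_\alpha}|p|+\eta\|a\|\bigr),
\]
with $\eta\lesssim\sqrt N\,(e\pi K^{-1-\alpha}N^{-\alpha})^{KN}$. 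For large $N$, $\eta$ is absorbed once $\alpha K>2+2\alpha$.

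Second, the Vandermonde inversion is not where the cost arises. Gautschi's bound gives row sums
\[
\prod_{l\ne k}\frac{1+\pi^2l^2}{\pi^2|k^2-l^2|}\le 2^N\binom{2N}{N+k}\Big/\binom{2N}{N}\le 2^N .
\]
The loss $e^{CN\log N}=e^{C\sqrt\Lambda\log\Lambda}$ comes instead from the factors $(2j+1)!\,M^{\alpha(2j+1)}$. It is bounded by $C_\beta e^{\Lambda^\beta}$ for every $\beta>1/2$, so the constant must depend on $\beta$.

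Finally, in the tensorization, exchanging $\sum_{k_1}$ with $\sup_{x'\in\omega_\alpha^{n-1}}$ costs an extra factor $N$, which is harmless.
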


The proof of Lemma~\ref{lem:spectral-special} differs from that of Lemma~\ref{lem:spectral-general}. It relies on a one-dimensional Remez-type inequality for trigonometric polynomials evaluated on the discrete set \(\omega_\alpha\), together with a Cartesian product argument in \((0,1)^n\). The essential feature of this inequality is that it holds for every exponent \(\beta\in(1/2,1)\), but not for the endpoint \(\beta=1/2\). As a result, for the fractional heat equation we may fix any \(\beta\in(1/2,s)\); this is possible precisely when \(s>1/2\).

\subsection{Observability inequality}

We now convert the spectral inequality into an observability estimate for the fractional heat equation
\begin{equation}\label{eq:frac-heat-prelim}
\partial_t u + A^s u =0,\quad u(0)=u_0.
\end{equation}
Assume that \(\omega\) is an admissible observation set with exponent \(\beta\in(0,s)\) in the sense of Definition~\ref{def:admissible-set}, i.e.,
\[
\|\Pi_\Lambda u\|_{L^2(\Omega)}\le C e^{C\Lambda^\beta}\sup_{x\in\omega}|\Pi_\Lambda u(x)|.
\]
The high-frequency dissipation is given by the semigroup estimate
\[
\|(I-\Pi_\Lambda)e^{-tA^s}u\|_{L^2(\Omega)}\le e^{-t\Lambda^s}\|u\|_{L^2(\Omega)},
\]
which follows directly from the spectral expansion. The Lebeau–Robbiano method balances the spectral cost \(e^{C\Lambda^\beta}\) with the dissipative factor \(e^{-t\Lambda^s}\); the necessary condition is \(\beta<s\). Choosing \(\Lambda\sim t^{-1/(s-\beta)}\) leads to an observability cost of order \(\exp\bigl(C/T^{\beta/(s-\beta)}\bigr)\).

\begin{theorem}[Unified thin-set observability]\label{thm:obs}
Let \(s\in(1/2,1]\) and let \(\omega\) be an admissible observation set \eqref{eq:unified-spectral} with exponent \(\beta\in(0,s)\). Then there exists a constant \(C>0\) such that for every \(T>0\) and every \(u_0\in L^2(\Omega)\),
\begin{equation}\label{eq:obs-beta-for-stab}
\|u(\cdot,T)\|_{L^2(\Omega)}\le C\exp\Bigl(\frac{C}{T^{\beta/(s-\beta)}}\Bigr)\int_0^T\sup_{x\in\omega}|u(x,t)|\,\diff t,    
\end{equation}
where \(u(x,t)=e^{-tA^s}u_0\) solves \eqref{eq:frac-heat-prelim}. The constant \(C\) depends on \(\Omega\), \(s\), \(\beta\), and the constants in \eqref{eq:spectral-general} or \eqref{eq:unified-spectral-ineq}.
\end{theorem}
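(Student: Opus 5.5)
We follow the Lebeau–Robbiano strategy, adapted to the \(L^\infty\)-in-space observation norm. The first step is an interval estimate. Fix \(0\le t_1<t_2\le T\) and \(\Lambda>0\), and write \(u(t)=e^{-tA^s}u_0\). Since \(\Pi_\Lambda\) commutes with the semigroup and \(e^{-(t_2-t)A^s}\) is a contraction, for every \(t\in(t_1,t_2)\)
\[
\|\Pi_\Lambda u(t_2)\|_{L^2}\le\|\Pi_\Lambda u(t)\|_{L^2}\le Ce^{C\Lambda^\beta}\sup_{\omega}|\Pi_\Lambda u(t)|.
\]
On \(\omega\) we decompose \(\Pi_\Lambda u(t)=u(t)-(I-\Pi_\Lambda)u(t)\). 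This requires a pointwise bound on the high-frequency part. We use the \(L^2\to L^\infty\) smoothing estimate
\[
\|(I-\Pi_\Lambda)e^{-\tau A^s}v\|_{L^\infty}\le C\tau^{-\gamma}e^{-\tau\Lambda^s/2}\|v\|_{L^2},\qquad \gamma=\frac{n}{4s}+\varepsilon .
\]
It follows from \(\|\varphi_k\|_{L^\infty}\le C\lambda_k^{(n-1)/4}\) (or any polynomial bound, via Sobolev embedding of \(\dom(A^m)\)) together with Weyl's law: one sums \(\sum_{\lambda_k>\Lambda}\lambda_k^{(n-1)/2}e^{-\tau\lambda_k^s}\) and extracts half of the exponential. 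Polynomial losses in \(\tau^{-1}\) and \(\Lambda\) are harmless and will be absorbed into the exponential cost.

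Next we apply this with \(v=u(t_1)\) and \(\tau=t-t_1\), restricting to \(t\in(\frac{t_1+t_2}{2},t_2)\), and integrate in \(t\). This gives
\[
\|\Pi_\Lambda u(t_2)\|\le \frac{Ce^{C\Lambda^\beta}}{t_2-t_1}\Bigl(\int_{t_1}^{t_2}\sup_\omega|u|\,dt+(t_2-t_1)^{1-\gamma}e^{-(t_2-t_1)\Lambda^s/4}\|u(t_1)\|\Bigr).
\]
Adding \(\|(I-\Pi_\Lambda)u(t_2)\|\le e^{-(t_2-t_1)\Lambda^s}\|u(t_1)\|\), we obtain the key estimate
\[
\|u(t_2)\|\le \frac{Ce^{C\Lambda^\beta}}{\ell}\int_{t_1}^{t_2}\sup_\omega|u|+C\ell^{-\gamma}e^{C\Lambda^\beta-c\ell\Lambda^s}\|u(t_1)\|,\qquad \ell=t_2-t_1 .
\]

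The third step is the telescoping iteration. Take a dyadic partition \(T_0=0\), \(\ell_j=T_{j+1}-T_j=c_0T2^{-j}\), and choose \(\Lambda_j\) with \(\ell_j\Lambda_j^s=M\Lambda_j^\beta\) for a large constant \(M\), that is, \(\Lambda_j=(M/\ell_j)^{1/(s-\beta)}\). This is the only place where \(\beta<s\) is used. Because \(\beta<s\), the remaining factor \(e^{C\Lambda_j^\beta-c\ell_j\Lambda_j^s}\ell_j^{-\gamma}\) becomes \(e^{-c'\Lambda_j^\beta}\), up to polynomial factors absorbed by \(M\). Multiplying the estimate on \((T_j,T_{j+1})\) by \(e^{-2C\Lambda_{j}^\beta}\) makes the \(\|u(T_j)\|\) term comparable to \(e^{-2C\Lambda_{j-1}^\beta}\|u(T_j)\|\). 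Since \(\Lambda_{j}^\beta=2^{\beta/(s-\beta)}\Lambda_{j-1}^\beta\), this ratio can be arranged by choosing \(M\) large. The sum then telescopes. As \(j\to\infty\), the weighted term \(e^{-2C\Lambda_j^\beta}\|u(T_j)\|\to0\) because \(\|u(T_j)\|\le\|u_0\|\). We are left with \(\|u(T')\|\le Ce^{C\Lambda_0^\beta}\int_0^T\sup_\omega|u|\) at \(T'=\lim T_j\le T\) (take \(c_0=1/2\)). Contractivity extends this to \(\|u(T)\|\le\|u(T')\|\). Finally, \(\Lambda_0^\beta\sim T^{-\beta/(s-\beta)}\), and the factor \(1/\ell_j\) is absorbed similarly. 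This yields \eqref{eq:obs-beta-for-stab}.

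We expect the main obstacle to be the pointwise high-frequency bound. Because the observation is a supremum over a possibly null set, the \(L^2\) dissipation estimate alone is insufficient, and we must control \(\sup_\omega|(I-\Pi_\Lambda)u(t)|\) via \(L^\infty\) eigenfunction bounds for \(\Omega\) with \(C^2\) boundary, for which a cruder Sobolev embedding bound is enough. The bookkeeping of the polynomial losses \(\ell_j^{-\gamma}\) and \(1/\ell_j\) in the geometric iteration is routine once \(M\) is fixed large.
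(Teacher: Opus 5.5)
\textbf{There is a genuine gap: your telescoping runs in the wrong time direction, so it does not yield the inequality.}

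Your interval estimate and the choice $\Lambda_j=(M/\ell_j)^{1/(s-\beta)}$ are fine, and they essentially reproduce the paper's single-step estimate. The paper gets the pointwise high-frequency bound more directly, from the ultracontractive estimate $\|e^{-tA^s}\|_{L^2\to L^\infty}\le C_0t^{-n/(4s)}$ applied at time $t/2$. Your eigenfunction-sum route actually gives a power like $(2n-1)/(4s)$ rather than $n/(4s)+\varepsilon$, but as you say, any polynomial loss is harmless.

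The problem is the partition. With $T_0=0$ and $\ell_j=T2^{-j-1}$, the intervals shrink toward the \emph{final} time $T'=T$, so $\Lambda_j\to\infty$ as $t\to T$. Your interval estimate only propagates forward:
$\|u(T_{j+1})\|\le A_jO_j+\epsilon_j\|u(T_j)\|$,
with $O_j=\int_{T_j}^{T_{j+1}}\sup_\omega|u|\,\diff t$, $A_j\lesssim \ell_j^{-1}e^{C\Lambda_j^\beta}$ and $\epsilon_j\lesssim e^{-c'\Lambda_j^\beta}$.

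Multiplying by $w_j=e^{-2C\Lambda_j^\beta}$ and summing over $0\le j\le J$ gives
$w_J\|u(T_{J+1})\|\le C\int_0^T\sup_\omega|u|\,\diff t+w_0\epsilon_0\|u_0\|$.
The weight that tends to zero sits on the final-time state, which is exactly what you want to bound. The term with a non-degenerate weight is $\|u_0\|$, and it sits on the right-hand side where it cannot be removed. Letting $J\to\infty$ therefore gives nothing.

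A warning sign: the compatibility condition $w_j\epsilon_j\le w_{j-1}$ holds trivially here, because $w_j\le w_{j-1}$, with no largeness of $M$ needed. Unrolling the recursion does not help either. The $\|u_0\|$ term carries $\prod_{i\le J}\epsilon_i$, which only vanishes as $J\to\infty$. But then the last coefficient $A_J$, multiplying $O_J\approx \ell_J\sup_\omega|u(T')|$, blows up like $e^{C\Lambda_J^\beta}$. So the claimed bound $\|u(T')\|\le Ce^{C\Lambda_0^\beta}\int_0^T\sup_\omega|u|\,\diff t$ does not follow.

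\textbf{The fix is to make the short intervals accumulate at $t=0$, as the paper does with $l_m=T2^{-m}$.}
\begin{enumerate}
\item Apply your interval estimate on $[T_{j+1},T_j]$ with $T_j=T2^{-j}$. This gives $\|u(T_j)\|\le A_jO_j+\epsilon_j\|u(T_{j+1})\|$ and $\Lambda_{j+1}^\beta=2^p\Lambda_j^\beta$, where $p=\beta/(s-\beta)$.
\item Take $w_j=e^{-K\Lambda_j^\beta}$ with $K$ larger than the cost constant, so that $w_jA_j$ stays bounded.
\item Telescoping now requires $w_j\epsilon_j\le w_{j+1}$, i.e.\ $K+c'\ge 2^pK$. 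This is a real condition. It is met by taking $M$ large, since $c'\approx cM$ grows while $K$ does not; this is precisely where $\beta<s$ enters.
\item Summing gives $w_0\|u(T)\|-w_{J+1}\|u(T_{J+1})\|\le C\int_0^T\sup_\omega|u|\,\diff t$. The vanishing-weight term is now the early-time one, bounded by $w_{J+1}\|u_0\|\to0$, and can be dropped.
\item Finally, $w_0^{-1}=e^{K\Lambda_0^\beta}$ with $\Lambda_0^\beta\sim T^{-p}$, which is the stated cost $\exp(C/T^{\beta/(s-\beta)})$.
\end{enumerate}
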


\begin{remark}
For general thin sets satisfying \eqref{eq:content-cond} we have \(\beta=1/2\); hence the theorem applies for \(s>1/2\) and gives the cost \(\exp\bigl(C/T^{1/(2s-1)}\bigr)\). For the special zero-dimensional sets of Lemma~\ref{lem:spectral-special}, one may fix any \(\beta\in(1/2,s)\) and obtain a similar estimate with the corresponding exponent. Moreover, using the \(L^2\)-\(L^\infty\) smoothing of the semigroup, one also obtains an \(L^\infty\) terminal estimate at the expense of an additional multiplicative constant, but the \(L^2\) version is sufficient for the inverse problem studied below.
\end{remark}

\subsection{Logarithmic stability for the inverse problem}

We now apply the observability inequality to the recovery of the initial data. Let \(\omega\) be an admissible observation set with exponent \(\beta\in(0,s)\), so that the estimate of Theorem~\ref{thm:obs} holds.

\begin{theorem}[Logarithmic stability]\label{thm:stability}
Let \(s\in(1/2,1]\), \(r>0\), \(M>0\), and let \(\omega\) be an admissible observation set \eqref{eq:unified-spectral} with exponent \(\beta\in(0,s)\). Assume that \(u_0,\widetilde u_0\in\dom(A^r)\) satisfy
\[
\|A^r u_0\|_{L^2(\Omega)}+\|A^r\widetilde u_0\|_{L^2(\Omega)}\le M .
\]
Let \(u, \widetilde u\) be the solution of \eqref{eq:frac-heat-source} with the initial data \(u_0,\tilde u_0\), respectively, and define the observation error
\[
\varepsilon = \int_0^T\sup_{x\in\omega}|u(x,t)-\widetilde u(x,t)|\,\diff t .
\]
Then there exists a constant \(C>0\), depending on \(\Omega\), \(\omega\), \(s\), \(\beta\), \(r\), \(T\), and the constants in the spectral inequality, such that
\begin{equation*}\label{eq:log-stab}
\|u_0-\widetilde u_0\|_{L^2(\Omega)}\le C M\Bigl[\log\Bigl(e+\frac{M}{\varepsilon}\Bigr)\Bigr]^{-r/s}.
\end{equation*}
When \(\varepsilon=0\), the right-hand side is interpreted as \(0\), so uniqueness holds.
\end{theorem}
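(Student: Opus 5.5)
Throughout, write \(w=u-\widetilde u\). Since the source \(f\) is common to both problems, the Duhamel parts cancel, so \(w\) solves the homogeneous equation \eqref{eq:frac-heat} with initial datum \(w_0=u_0-\widetilde u_0\). That is, \(w(t)=e^{-tA^s}w_0\), with \(\|A^r w_0\|_{L^2(\Omega)}\le M\) and \(\int_0^T\sup_{x\in\omega}|w(x,t)|\,\diff t=\varepsilon\). The plan is to split \(w_0\) at a frequency level \(\Lambda>0\) to be optimised later.

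\emph{High frequencies.} The tail is controlled by the a priori bound alone:
\[
\|(I-\Pi_\Lambda)w_0\|_{L^2(\Omega)}\le \Lambda^{-r}\|A^r w_0\|_{L^2(\Omega)}\le M\Lambda^{-r}.
\]

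\emph{Low frequencies.} Write \(w_0=\Pi_\Lambda w_0+(I-\Pi_\Lambda)w_0\), so that \(w=w^{\rm low}+w^{\rm high}\). The backward step costs
\[
\|\Pi_\Lambda w_0\|_{L^2(\Omega)}\le e^{T\Lambda^s}\|\Pi_\Lambda w(T)\|_{L^2(\Omega)}\le e^{T\Lambda^s}\|w(T)\|_{L^2(\Omega)} .
\]
Theorem~\ref{thm:obs} then gives \(\|w(T)\|_{L^2(\Omega)}\le C_T\varepsilon\) with \(C_T=C\exp(C T^{-\beta/(s-\beta)})\). Hence
\[
\|w_0\|_{L^2(\Omega)}\le C_T e^{T\Lambda^s}\varepsilon+M\Lambda^{-r}.
\]
Applying observability to all of \(w\) avoids having to control \(\sup_\omega|w^{\rm high}|\) pointwise; doing so would require \(L^\infty\) smoothing bounds on the tail.

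\emph{Optimisation.} If \(\varepsilon\ge M/e\), the trivial bound \(\|w_0\|\le \lambda_1^{-r}M\) already has the required form, since \(\log(e+M/\varepsilon)\) is then bounded. Otherwise choose
\[
\Lambda^s=\frac{1}{2T}\log\Bigl(e+\frac{M}{\varepsilon}\Bigr),
\]
so that \(e^{T\Lambda^s}=(e+M/\varepsilon)^{1/2}\). Since \(\varepsilon\le M\), we have \(\varepsilon(e+M/\varepsilon)^{1/2}\le CM^{1/2}\varepsilon^{1/2}\le CM(e+M/\varepsilon)^{-1/2}\). The first term is therefore at most \(C_TM(e+M/\varepsilon)^{-1/2}\), which is bounded by \(CM[\log(e+M/\varepsilon)]^{-r/s}\). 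The second term equals \(M(2T)^{r/s}[\log(e+M/\varepsilon)]^{-r/s}\).

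If \(\Lambda<\lambda_1\), the projection \(\Pi_\Lambda\) vanishes. The tail bound then holds with \(\lambda_1^{-r}\) in place of \(\Lambda^{-r}\), and since \(\log(e+M/\varepsilon)\) is bounded in this regime, the conclusion is unaffected after enlarging \(C\). The case \(\varepsilon=0\) follows from the first bound by letting \(\Lambda\to\infty\).

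\emph{Main obstacle.} The analytic input is entirely Theorem~\ref{thm:obs}, so the only delicate step is the bookkeeping. The polynomial gain \((e+M/\varepsilon)^{-1/2}\) must dominate the logarithmic rate uniformly, and the resulting constant depends on \(T\) and on \(C_T\), as the statement allows.
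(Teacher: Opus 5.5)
Your proposal is correct and follows the paper's proof essentially step for step. You reduce to the homogeneous difference $w=e^{-tA^s}w_0$, bound the tail by $M\Lambda^{-r}$, bound the low modes by $e^{T\Lambda^s}\|w(\cdot,T)\|_{L^2(\Omega)}\le C_Te^{T\Lambda^s}\varepsilon$ using Theorem~\ref{thm:obs}, and optimise in $\Lambda$. The only differences are in bookkeeping: setting $T\Lambda^s=\frac12\log(e+M/\varepsilon)$ directly, with trivial regime $\varepsilon\ge M/e$, spares you the paper's comparison of $\log\frac{M}{C_T\varepsilon}$ with $\log(e+M/\varepsilon)$, and you get uniqueness by letting $\Lambda\to\infty$ rather than from injectivity of $e^{-TA^s}$.
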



\section{Observability for the thin-set}\label{sec3}
\begin{proof}[\bf Proof of Theorem~\ref{thm:obs}]
The proof follows the Lebeau--Robbiano strategy, combining the unified spectral inequality \eqref{eq:unified-spectral-ineq} with the high-frequency dissipation of the fractional heat semigroup.

For the semigroup \(e^{-tA^s}\), we use the ultracontractive estimate
\begin{equation}\label{eq:ultra-beta}
\|e^{-tA^s}u\|_{L^\infty(\Omega)} \le C_0 t^{-n/(4s)}\|u\|_{L^2(\Omega)},\qquad t>0.
\end{equation}
This follows from subordination to the classical heat semigroup and the standard \(L^2\)--\(L^\infty\) bound for \(e^{-tA}\).

Fix \(0\le t_1<t_2\le T\), let \(\tau=t_2-t_1\), and set \(v(x,t)=u(x,t_1+t)\), \(0\le t\le\tau\).
For any \(\Lambda>0\) decompose
\[
v(x,t)=\Pi_\Lambda v(x,t)+(I-\Pi_\Lambda)v(x,t).
\]
Applying the spectral inequality \eqref{eq:unified-spectral-ineq} to \(\Pi_\Lambda v(t)\) gives
\begin{equation}\label{eq:low-local-beta}
\|\Pi_\Lambda v(\cdot,t)\|_{L^2(\Omega)}\le C_1 e^{C_1\Lambda^\beta}\sup_{x\in\omega}|\Pi_\Lambda v(x,t)|,\qquad 0<t\le\tau.
\end{equation}
Using the spectral expansion of the semigroup,
\[
(I-\Pi_\Lambda)v(x,t)=e^{-tA^s}(I-\Pi_\Lambda)u(x,t_1)
=\sum_{\lambda_k>\Lambda} e^{-t\lambda_k^s}\langle u(x,t_1),\varphi_k\rangle\varphi_k .
\]
By orthonormality,
\[
\begin{aligned}
\|(I-\Pi_\Lambda)v(\cdot,t)\|_{L^2(\Omega)}^2=\sum_{\lambda_k>\Lambda} e^{-2t\lambda_k^s}|\langle u(x,t_1),\varphi_k\rangle|^2 \le e^{-2t\Lambda^s}\|u(\cdot,t_1)\|_{L^2(\Omega)}^2 .
\end{aligned}
\]
Taking square roots, then the high-frequency part satisfies 
\begin{equation}\label{eq:high-L2-beta}
\|(I-\Pi_\Lambda)v(\cdot,t)\|_{L^2(\Omega)}\le e^{-t\Lambda^s}\|u(\cdot,t_1)\|_{L^2(\Omega)}.
\end{equation}

Now we also need to estimate the supremum of the high-frequency part on the observation set.  
Using the semigroup property, we write
\[
(I-\Pi_\Lambda)v(x,t) = e^{-(t/2)A^s}\bigl[(I-\Pi_\Lambda)v(x,t/2)\bigr],\qquad 0<t\le\tau.
\]
Applying the ultracontractive estimate \eqref{eq:ultra-beta} and the high-frequency \(L^2\) bound \eqref{eq:high-L2-beta} (with \(t/2\) in place of \(t\)), we obtain for any \(t\in[\tau/2,\tau]\)
\[
\begin{aligned}
\|(I-\Pi_\Lambda)v(\cdot,t)\|_{L^\infty(\Omega)}
&\le C_0 (t/2)^{-n/(4s)} \|(I-\Pi_\Lambda)v(\cdot,t/2)\|_{L^2(\Omega)} \\
&\le C_0 (t/2)^{-n/(4s)} e^{-(t/2)\Lambda^s} \|u(\cdot,t_1)\|_{L^2(\Omega)} .
\end{aligned}
\]
Since \(t\ge \tau/2\), we have \((t/2)^{-n/(4s)}\le (\tau/4)^{-n/(4s)}\) and \(e^{-(t/2)\Lambda^s}\le e^{-(\tau/4)\Lambda^s}\).  
Therefore there exist constants \(C_2>0\) and \(c_0\in(0,1)\), depending only on \(n\) and \(s\), such that
\[
\sup_{x\in\omega}|(I-\Pi_\Lambda)v(x,t)|
\le \|(I-\Pi_\Lambda)v(\cdot,t)\|_{L^\infty(\Omega)}
\le C_2 \tau^{-n/(4s)} e^{-c_0\tau\Lambda^s} \|u(\cdot,t_1)\|_{L^2(\Omega)},\quad t\in[\tau/2,\tau].
\]

Now bound the low-frequency observation pointwise:
\[
\begin{aligned}
\sup_{x\in\omega}|\Pi_\Lambda v(x,t)|
&\le
\sup_{x\in\omega}|v(x,t)|
+
\sup_{x\in\omega}|(I-\Pi_\Lambda)v(x,t)|
\\
&\le
\sup_{x\in\omega}|u(x,t_1+t)|
+
C_2 \tau^{-n/(4s)}
e^{-c_0\tau\Lambda^s}
\|u(\cdot,t_1)\|_{L^2(\Omega)}
\end{aligned}
\]
for \(t\in[\tau/2,\tau]\). Insert this into \eqref{eq:low-local-beta} and combine with \eqref{eq:high-L2-beta} to get, for \(t\in[\tau/2,\tau]\),
\begin{equation}\label{eq:pointwise}
\begin{aligned}
\|u(\cdot,t_1+t)\|_{L^2(\Omega)}
&\le C_1 e^{C_1\Lambda^\beta} \sup_{x\in\omega}|u(x,t_1+t)| \\
&\quad + \Bigl(C_1C_2 \tau^{-n/(4s)} e^{C_1\Lambda^\beta-c_0\tau\Lambda^s}
+ e^{-t\Lambda^s}\Bigr) \|u(\cdot,t_1)\|_{L^2(\Omega)}.
\end{aligned}    
\end{equation}
Since \(t\ge\tau/2\), we have \(e^{-t\Lambda^s}\le e^{-\frac{\tau}{2}\Lambda^s}\).
Because the semigroup is contractive, \(\|u(\cdot,t_2)\|_{L^2(\Omega)}\le \|u(\cdot,t_1+t)\|_{L^2(\Omega)}\) for \(t\in[\tau/2,\tau]\).
Integrating the inequality \eqref{eq:pointwise} over \(t\in[\tau/2,\tau]\) and dividing by the length \(\tau/2\) yields
\begin{equation}\label{eq:single-step-beta}
\begin{aligned}
\|u(\cdot,t_2)\|_{L^2(\Omega)}
&\le \frac{2C_1}{\tau} e^{C_1\Lambda^\beta} \int_{t_1}^{t_2} \sup_{x\in\omega}|u(x,t)|\,\diff t \\
&\quad + \Bigl(C_1C_2 \tau^{-n/(4s)} e^{C_1\Lambda^\beta-c_0\tau\Lambda^s}
+ e^{-\frac{\tau}{2}\Lambda^s}\Bigr) \|u(\cdot,t_1)\|_{L^2(\Omega)}.
\end{aligned}
\end{equation}

The condition \(s>\beta\) allows us to balance the exponentials. Set
\[
p = \frac{\beta}{s-\beta}>0,\qquad \Lambda = M\tau^{-\frac{1}{s-\beta}},
\]
with a large constant \(M>0\) to be chosen. Then
\[
\Lambda^\beta = M^\beta \tau^{-p},\qquad \tau\Lambda^s = M^s \tau^{-p}.
\]
Consequently,
\[
c_0\tau\Lambda^s - C_1\Lambda^\beta = (c_0 M^s - C_1 M^\beta)\tau^{-p}.
\]
Pick \(M\) so large that \(c_0 M^s - C_1 M^\beta \ge 4D\) for some \(D>0\). Then
\[
C_1\Lambda^\beta - c_0\tau\Lambda^s \le -4D\tau^{-p}.
\]
The polynomial factor \(\tau^{-n/(4s)}\) can be absorbed by the exponential, i.e.
\[
\tau^{-n/(4s)} e^{-4D\tau^{-p}} \le C e^{-3D\tau^{-p}},\qquad 0<\tau\le T.
\]
Choosing \(M\) also large enough so that \(\frac12 M^s \ge 3D\), we obtain
\[
e^{-\frac{\tau}{2}\Lambda^s} = e^{-\frac12 M^s\tau^{-p}} \le e^{-3D\tau^{-p}}.
\]
Thus the coefficient of \(\|u(\cdot,t_1)\|_{L^2(\Omega)}\) in \eqref{eq:single-step-beta} is bounded by \(C e^{-3D\tau^{-p}}\).
Enlarging \(C\) if necessary, we rewrite the single-step estimate \eqref{eq:single-step-beta} as
\begin{equation}\label{eq:single-step-beta-final}
\|u(\cdot,t_2)\|_{L^2(\Omega)}
\le C_3 \exp\Bigl(\frac{C_4}{\tau^{p}}\Bigr) \int_{t_1}^{t_2} \sup_{x\in\omega}|u(x,t)|\,\diff t
+ e^{-D/\tau^{p}} \|u(\cdot,t_1)\|_{L^2(\Omega)},
\end{equation}
where \(C_3,C_4\) and \(D\) are now fixed positive constants.

Set \(l_m = T/2^m\) (\(m=0,1,\dots\)), so that \(\tau_m = l_m-l_{m+1} = T/2^{m+1}\).
Apply \eqref{eq:single-step-beta-final} with \(t_1=l_{m+1}\), \(t_2=l_m\):
\[
\|u(\cdot,l_m)\|_{L^2(\Omega)}
\le C_3 \exp\Bigl(\frac{C_4}{\tau_m^{p}}\Bigr) \int_{l_{m+1}}^{l_m} \sup_{\omega}|u|\,\diff t
+ e^{-D/\tau_m^{p}} \|u(\cdot,l_{m+1})\|_{L^2(\Omega)}.
\]
Introduce the weights \(a_m = \exp(-E/\tau_m^{p})\) with \(E>C_4\) to be chosen.
Multiplying by \(a_m\) gives
\[
a_m\|u(\cdot,l_m)\|_{L^2(\Omega)}
\le C_3 \exp\Bigl(\frac{C_4-E}{\tau_m^{p}}\Bigr) \int_{l_{m+1}}^{l_m} \sup_{\omega}|u(x,t)|\,\diff t
+ \exp\Bigl(-\frac{E+D}{\tau_m^{p}}\Bigr) \|u(\cdot,l_{m+1})\|_{L^2(\Omega)}.
\]
Because \(E>C_4\), \(\exp\bigl(\frac{C_4-E}{\tau_m^{p}}\bigr) \le 1\).
Since \(\tau_{m+1} = \tau_m/2\), we have \(a_{m+1} = \exp(-E 2^{p}/\tau_m^{p})\).
Choose \(D\) large enough so that \(E+D \ge E 2^{p}\). Then
\[
\exp\Bigl(-\frac{E+D}{\tau_m^{p}}\Bigr) \le a_{m+1}.
\]
Hence
\[
a_m\|u(\cdot,l_m)\|_{L^2(\Omega)} - a_{m+1}\|u(\cdot,l_{m+1})\|_{L^2(\Omega)}
\le C_3 \int_{l_{m+1}}^{l_m} \sup_{x\in\omega}|u(x,t)|\,\diff t.
\]
Summing over \(m=0,\dots,N\) gives
\[
a_0\|u(\cdot,T)\|_{L^2(\Omega)} - a_{N+1}\|u(\cdot,l_{N+1})\|_{L^2(\Omega)}
\le C_3 \int_{l_{N+1}}^T \sup_{x\in\omega}|u(x,t)|\,\diff t.
\]
As \(N\to\infty\), \(\tau_{N+1}=T/2^{N+2}\to0\), hence \(a_{N+1}=\exp(-E 2^{p}/\tau_{N+1}^p)\to0\), while \(\|u(\cdot,l_{N+1})\|_{L^2(\Omega)}\le\|u_0\|_{L^2(\Omega)}\) remains bounded. Moreover, the time interval \([l_{N+1},T]\) converges to \([0,T]\). Consequently, letting \(N\to\infty\) and dropping the non‑negative term \(a_{N+1}\|u(\cdot,l_{N+1})\|_{L^2(\Omega)}\) on the left‑hand side, we obtain
\[
a_0\|u(\cdot,T)\|_{L^2(\Omega)} \le C_3 \int_0^T \sup_{x\in\omega}|u(x,t)|\,\diff t .
\]
Since \(\tau_0 = T/2\), \(a_0 = \exp(-E 2^{p}/T^{p})\).
Recalling \(p = \beta/(s-\beta)\), we finally obtain
\[
\|u(\cdot,T)\|_{L^2(\Omega)} \le C \exp\Bigl(\frac{C}{T^{\beta/(s-\beta)}}\Bigr)
\int_0^T \sup_{x\in\omega}|u(x,t)|\,\diff t,
\]
which completes the proof.
\end{proof}

\begin{remark}
The condition \(\beta<s\) is sharp for the Lebeau--Robbiano iteration employed here.  
If \(\beta\ge s\), the spectral cost \(e^{C\Lambda^\beta}\) cannot be absorbed by the high-frequency decay \(e^{-c\tau\Lambda^s}\), and the present argument does not yield the desired observability inequality.
\end{remark}

\section{Logarithmic stability for recovering the initial data}\label{sec4}

\begin{proof}[\bf Proof of Theorem~\ref{thm:stability}]

Since a source term \(f\) is known, we can decompose the solution as \(u=u^0+u^f\), where \(u^0\) solves the homogeneous equation \eqref{eq:frac-heat} with the given initial data and \(u^f\) is the Duhamel contribution
\[
u^f(x,t)=\int_0^t e^{-(t-\tau)A^s}f(x,\tau)\,\diff\tau .
\]
Since \(u^f\) is known, the difference of two solutions reduces to the homogeneous case: \(u-\widetilde u = u^0-\widetilde u^0\).

Set \(w(x,t)=u^0(x,t)-\tilde u^0(x,t)\) and \(w_0(x)=w(x,0)\). Then \(w\) satisfies the homogeneous equation
\[
\partial_t w+A^s w=0,\quad w(0)=w_0,
\]
and the observation error is exactly \(\varepsilon=\int_0^T\sup_{x\in\omega}|w(x,t)|\,\diff t\).
From the a priori bounds we obtain
\[
\|A^r w_0\|_{L^2(\Omega)}\le M,\qquad
\|w_0\|_{L^2(\Omega)}\le \lambda_1^{-r}M .
\]

If \(\varepsilon=0\), the observability inequality \eqref{eq:obs-beta-for-stab} gives \(w(\cdot,T)=0\).
Since the semigroup \(e^{-TA^s}\) is injective on \(L^2(\Omega)\) (all eigenvalues satisfy \(e^{-T\lambda_k^s}>0\)), we conclude that \(w_0=0\), so the desired estimate holds trivially. Thus we assume \(\varepsilon>0\) in what follows.

Apply Theorem~\ref{thm:obs} to \(w\). The observability inequality \eqref{eq:obs-beta-for-stab} yields
\begin{equation}\label{eq:terminal-L2}
\|w(\cdot,T)\|_{L^2(\Omega)}\le C_T\varepsilon,
\qquad
C_T:=C_{\mathrm{obs}}\exp\Bigl(\frac{C_{\mathrm{obs}}}{T^{\beta/(s-\beta)}}\Bigr).
\end{equation}
The constant \(C_T\) depends on \(\Omega,\omega,s,\beta,T\) and on the constants appearing in the corresponding spectral inequality.

For an arbitrary \(\Lambda>0\), split \(w_0\) into low- and high-frequency components:
\[
w_0=\Pi_\Lambda w_0+(I-\Pi_\Lambda)w_0,
\]
where \(\Pi_\Lambda\) is the orthogonal projection onto the span of all eigenfunctions with \(\lambda_k\le\Lambda\). By the triangle inequality,
\begin{equation}\label{eq:decomp}
\|w_0\|_{L^2(\Omega)}\le\|\Pi_\Lambda w_0\|_{L^2(\Omega)}+\|(I-\Pi_\Lambda)w_0\|_{L^2(\Omega)}.
\end{equation}

Because the semigroup commutes with \(\Pi_\Lambda\),
\[
\Pi_\Lambda w(\cdot,T)=\Pi_\Lambda e^{-TA^s}w_0=e^{-TA^s}\Pi_\Lambda w_0.
\]
The restriction of \(e^{-TA^s}\) to the finite-dimensional range of \(\Pi_\Lambda\) is invertible, with inverse \(e^{TA^s}\). Hence
\[
\Pi_\Lambda w_0=e^{TA^s}\Pi_\Lambda w(x,T).
\]
Using the spectral expansion, for any \(g\in L^2(\Omega)\),
\[
\|e^{TA^s}\Pi_\Lambda g\|_{L^2(\Omega)}^2
=\sum_{\lambda_k\le\Lambda}e^{2T\lambda_k^s}|\langle g,\varphi_k\rangle|^2
\le e^{2T\Lambda^s}\sum_{\lambda_k\le\Lambda}|\langle g,\varphi_k\rangle|^2
= e^{2T\Lambda^s}\|\Pi_\Lambda g\|_{L^2(\Omega)}^2.
\]
Applying this with \(g=w(x,T)\) and using \eqref{eq:terminal-L2} gives
\begin{equation}\label{eq:low}
\|\Pi_\Lambda w_0\|_{L^2(\Omega)}\le e^{T\Lambda^s}\|\Pi_\Lambda w(\cdot,T)\|_{L^2(\Omega)}
\le C_T e^{T\Lambda^s}\varepsilon .
\end{equation}

The a priori regularity \(w_0\in\dom(A^r)\) implies
\[
\|(I-\Pi_\Lambda)w_0\|_{L^2(\Omega)}^2
=\sum_{\lambda_k>\Lambda}|\langle w_0,\varphi_k\rangle|^2
\le \Lambda^{-2r}\sum_{\lambda_k>\Lambda}\lambda_k^{2r}|\langle w_0,\varphi_k\rangle|^2
\le \Lambda^{-2r}M^2.
\]
Taking square roots yields
\begin{equation}\label{eq:high}
\|(I-\Pi_\Lambda)w_0\|_{L^2(\Omega)}\le M\Lambda^{-r}.
\end{equation}

Insert \eqref{eq:low} and \eqref{eq:high} into \eqref{eq:decomp} to obtain, for every \(\Lambda>0\),
\begin{equation}\label{eq:before-opt}
\|w_0\|_{L^2(\Omega)}\le C_T e^{T\Lambda^s}\varepsilon+M\Lambda^{-r}.
\end{equation}
The first term grows with \(\Lambda\) due to backward amplification of low frequencies, while the second term decays because higher modes are controlled by the a priori bound. We now choose \(\Lambda\) to balance these competing effects.

Assume temporarily that \(0<\varepsilon<M/(eC_T)\). Then \(\log\frac{M}{C_T\varepsilon}>1\). Set
\[
\Lambda=\Bigl(\frac{1}{2T}\log\frac{M}{C_T\varepsilon}\Bigr)^{1/s}.
\]
Consequently \(T\Lambda^s=\frac12\log\frac{M}{C_T\varepsilon}\) and \(e^{T\Lambda^s}=\sqrt{M/(C_T\varepsilon)}\).
Substituting these relations into \eqref{eq:before-opt} gives
\[
\|w_0\|_{L^2(\Omega)}\le \sqrt{C_T M\varepsilon}+(2T)^{r/s}M\Bigl(\log\frac{M}{C_T\varepsilon}\Bigr)^{-r/s}.
\]
To absorb the first term, write \(\eta:=\log\frac{M}{C_T\varepsilon}>1\). Then
\[
\sqrt{C_T M\varepsilon}=M e^{-\eta/2}\le C_{r,s,T} M \eta^{-r/s}
\]
for a suitable constant \(C_{r,s,T}\), because the exponential decays faster than any negative power of \(\eta\) as \(\eta\to\infty\).
Thus
\begin{equation}\label{eq:log-with-CT}
\|w_0\|_{L^2(\Omega)}\le C_1 M\Bigl(\log\frac{M}{C_T\varepsilon}\Bigr)^{-r/s},
\qquad 0<\varepsilon<\frac{M}{eC_T},
\end{equation}
with a constant \(C_1\) depending on \(r,s,T\) and \(C_T\).

For \(\varepsilon\) in the considered range, since \(\eta = \log\frac{M}{C_T\varepsilon}>1\), then \(M/\varepsilon = C_T e^{\eta}\), and
\[
\log\Bigl(e+\frac{M}{\varepsilon}\Bigr)
= \log(e + C_T e^{\eta})
= \eta + \log\Bigl(C_T + e^{1-\eta}\Bigr).
\]
Since \(\eta>1\), we have \(0< e^{1-\eta}\le 1\), hence
\[
\log C_T < \log(C_T + e^{1-\eta}) \le \log(C_T+1).
\]
Therefore
\[
\eta + \log C_T < \log\Bigl(e+\frac{M}{\varepsilon}\Bigr) \le \eta + \log(C_T+1).
\]
Moreover, there exist constants \(c_1, c_2 > 0\) depending only on \(C_T\) such that
\[
c_1 \eta \;\le\; \eta + \log C_T, \quad \eta + \log(C_T+1) \;\le\; c_2 \eta .
\]
Indeed, one may take \(c_1 = 1 - |\log C_T|\) if \(|\log C_T| < 1\), and a smaller positive constant otherwise; \(c_2 = 1 + \log(C_T+1)\) always works.
Thus
\[
c_1 \log\frac{M}{C_T\varepsilon}
\;\le\;
\log\Bigl(e+\frac{M}{\varepsilon}\Bigr)
\;\le\;
c_2 \log\frac{M}{C_T\varepsilon}.
\]
Raising to the power \(-r/s\) reverses the inequalities:
\[
c_2^{-r/s}\Bigl(\log\frac{M}{C_T\varepsilon}\Bigr)^{-r/s}
\;\le\;
\Bigl[\log\Bigl(e+\frac{M}{\varepsilon}\Bigr)\Bigr]^{-r/s}
\;\le\;
c_1^{-r/s}\Bigl(\log\frac{M}{C_T\varepsilon}\Bigr)^{-r/s}.
\]
Combining this with \eqref{eq:log-with-CT} yields, for \(0<\varepsilon<M/(eC_T)\),
\[
\|w_0\|_{L^2(\Omega)} \le C_2 M \Bigl[\log\Bigl(e+\frac{M}{\varepsilon}\Bigr)\Bigr]^{-r/s},
\]
where \(C_2 = C_1 c_2^{r/s}\) depends on \(r,s,T,C_T\) and on the structural constants of the observation set.

It remains to consider the case \(\varepsilon\ge M/(eC_T)\). In this regime we rely directly on the a priori estimate for \(w_0\). Since \(\lambda_1>0\) and \(\|A^r w_0\|_{L^2(\Omega)}\le M\),
\[
\|w_0\|_{L^2(\Omega)}\le \lambda_1^{-r}\|A^r w_0\|_{L^2(\Omega)}\le \lambda_1^{-r}M.
\]
On the other hand, the condition \(\varepsilon\ge M/(eC_T)\) implies \(\frac{M}{\varepsilon}\le eC_T\), so that
\[
\log\Bigl(e+\frac{M}{\varepsilon}\Bigr)\le \log(e+eC_T).
\]
Hence the logarithmic factor \(\bigl[\log(e+M/\varepsilon)\bigr]^{-r/s}\) is bounded from below by the positive constant \(\bigl[\log(e+eC_T)\bigr]^{-r/s}\). Consequently,
\[
\lambda_1^{-r}M \le C_3 M\Bigl[\log\Bigl(e+\frac{M}{\varepsilon}\Bigr)\Bigr]^{-r/s}
\]
with \(C_3 = \lambda_1^{-r}\bigl[\log(e+eC_T)\bigr]^{r/s}\).

Choosing the final constant as \(C = \max\{C_2, C_3\}\), we obtain the desired inequality
\[
\|w_0\|_{L^2(\Omega)} \le C M\Bigl[\log\Bigl(e+\frac{M}{\varepsilon}\Bigr)\Bigr]^{-r/s},
\qquad\text{for all }\varepsilon>0.
\]
Together with the trivial case \(\varepsilon=0\) treated at the beginning, the proof is complete.
\end{proof}

\begin{remark}
Theorem~\ref{thm:stability} shows that the recovery of the initial data from thin-set observations is logarithmically stable under an a priori \(A^r\)-bound. The logarithmic rate \([\log(e+M/\varepsilon)]^{-r/s}\) arises from balancing the backward amplification \(e^{T\Lambda^s}\) of low frequencies against the high-frequency tail \(M\Lambda^{-r}\) controlled by the a priori smoothness. The exponent \(\beta\) from the spectral inequality does not affect this logarithmic rate; it influences only the observability constant \(C_T\), whose short-time behaviour is
\[
C_T \sim \exp\Bigl(\frac{C}{T^{\beta/(s-\beta)}}\Bigr).
\]
Thus the stability rate in \(\varepsilon\) remains logarithmic with exponent \(r/s\), while the multiplicative constant deteriorates as \(\beta\) approaches \(s\).     
\end{remark}

\section{Numerical experiments}\label{sec5}

The forward model is truncated to a finite number of eigenmodes, synthetic data are corrupted by additive Gaussian noise, and the inversion is performed via a regularised least‑squares method with a spectral penalty on the higher eigenmodes. The regularisation parameter is chosen by the discrepancy principle. We first describe the unified numerical algorithm and then provide a conditional convergence analysis based on the logarithmic stability estimate established in Theorem~\ref{thm:stability}.

\subsection{Numerical algorithm}\label{sec:algorithm}

Let \(\Omega=(0,1)^n\) with \(n=1\) or \(n=2\). The Dirichlet Laplacian \(A=-\Delta\) admits the orthonormal eigenfunctions
\[
\varphi_{\mathbf{k}}(x)=(\sqrt{2})^{\,n}\prod_{j=1}^{n}\sin(k_j\pi x_j),\qquad
\mathbf{k}=(k_1,\dots,k_n)\in\mathbb N^n,
\]
with eigenvalues \(\lambda_{\mathbf{k}}=\pi^2\sum_{j=1}^{n}k_j^2\). The fractional Laplacian acts as \(A^s\varphi_{\mathbf{k}}=\lambda_{\mathbf{k}}^s\varphi_{\mathbf{k}}\).

We consider the nonhomogeneous equation
\[
\partial_t u+A^s u=f(x,t),\quad u(0)=u_0,\quad u|_{\partial\Omega}=0,
\]
with a known source term \(f\). Setting \(a_{\mathbf{k}}=\langle u_0,\varphi_{\mathbf{k}}\rangle\) and \(f_{\mathbf{k}}(t)=\langle f(\cdot,t),\varphi_{\mathbf{k}}\rangle\), Duhamel's formula gives the decomposition \(u=u^0+u^f\), where
\[
u^0(x,t)=\sum_{\mathbf{k}}a_{\mathbf{k}}e^{-t\lambda_{\mathbf{k}}^s}\varphi_{\mathbf{k}}(x),\qquad
u^f(x,t)=\sum_{\mathbf{k}}\varphi_{\mathbf{k}}(x)\int_0^t e^{-(t-\tau)\lambda_{\mathbf{k}}^s}f_{\mathbf{k}}(\tau)\,\diff\tau .
\]
The term \(u^f\) is known, while \(u^0\) contains the unknown initial data.

For numerical computation, we truncate the expansion to the finite index set
\[
\mathcal I_N=\{\mathbf{k}\in\mathbb N^n:1\le k_j\le N,\;j=1,\dots,n\},
\]
and denote by \(\mathbf a\in\mathbb R^{\mathcal I_N}\) the vector of unknown coefficients. The approximate solution is
\[
u_N(x,t)=\sum_{\mathbf{k}\in\mathcal I_N} a_{\mathbf{k}} e^{-t\lambda_{\mathbf{k}}^s}\varphi_{\mathbf{k}}(x)
+\sum_{\mathbf{k}\in\mathcal I_N}\varphi_{\mathbf{k}}(x)\int_0^t e^{-(t-\tau)\lambda_{\mathbf{k}}^s}f_{\mathbf{k}}(\tau)\,\diff\tau .
\]
We write \(u_N = u_N^0 + u_N^f\) for the homogeneous and source parts. The exact source response \(u^f\) and its truncated version \(u_N^f\) differ by a source truncation error, which will be analysed in the convergence study.

To evaluate the Fourier coefficients and the Duhamel integrals, we employ a uniform spatial quadrature grid. In each coordinate direction we take \(N_{\rm int}\) equally spaced interior nodes
\[
x_j = \frac{j}{N_{\rm int}+1},\qquad j=1,\dots,N_{\rm int},
\]
and in two dimensions we use the Cartesian product of such one‑dimensional grids. 

Choose a finite observation set \(\omega=\{x^{(1)},\dots,x^{(M_x)}\}\subset\Omega\) and discrete times \(t_i=i\Delta t\), \(i=1,\dots,N_t\), with \(\Delta t=T/N_t\). Define the homogeneous forward matrix \(\mathbf K\in\mathbb R^{M_xN_t\times|\mathcal I_N|}\) by
\[
K_{(i,j),\mathbf{k}}=e^{-t_i\lambda_{\mathbf{k}}^s}\varphi_{\mathbf{k}}(x^{(j)}),
\]
and the truncated source data vector \(\mathbf d^f\) by
\[
(\mathbf d^f)_{(i,j)}=\sum_{\mathbf{k}\in\mathcal I_N}\varphi_{\mathbf{k}}(x^{(j)})\int_0^{t_i} e^{-(t_i-\tau)\lambda_{\mathbf{k}}^s}f_{\mathbf{k}}(\tau)\,\diff\tau .
\]

Let \(\mathbf a^{\rm true}\) be the coefficient vector of the truncated initial data \(u_{0,N}=\Pi_{\Lambda_N}u_0\). The exact, noise-free data at the observation points are
\[
\mathbf d_{\rm clean} = \mathbf K\mathbf a^{\rm true} + \mathbf d^f_{\rm exact},
\]
where \(\mathbf d^f_{\rm exact}\) denotes the contribution of the full (untruncated) source response at the sensors. In practice we only have access to the truncated approximation \(\mathbf d^f\). The source truncation error is then defined as
\[
\mathbf e^f = \mathbf d^f_{\rm exact} - \mathbf d^f .
\]

To simulate measurement noise, we add independent Gaussian noise \(\boldsymbol\eta\) with standard deviation
\[
\sigma = \delta\,\max_m |(\mathbf d_{\rm clean})_m|,\qquad m=1,2,\dots,M_x N_t,
\]
where \(\delta\in(0,1)\) is the prescribed relative noise level. The noisy data vector actually used for inversion is therefore
\[
\mathbf d = \mathbf d_{\rm clean} + \boldsymbol\eta = \mathbf K\mathbf a^{\rm true} + \mathbf d^f + \mathbf e^f + \boldsymbol\eta .
\]

Since the source term \(f\) is known, the truncated contribution \(\mathbf d^f\) can be computed and subtracted, yielding the corrected data
\[
\mathbf d^{\,0} = \mathbf d - \mathbf d^f = \mathbf K\mathbf a^{\rm true} + \mathbf e^f + \boldsymbol\eta .
\]
After this correction, the inverse problem is reduced to the homogeneous case, but now with an additional deterministic error \(\mathbf e^f\) that accounts for the finite truncation of the source term. This error is further analysed in the convergence study below.

We recover \(\mathbf a\) by solving
\begin{equation}\label{eq:reg-source}
\hat{\mathbf a}=\arg\min_{\mathbf a}\bigl(\|\mathbf K\mathbf a-\mathbf d^{\,0}\|_2^2+\gamma\|L\mathbf a\|_2^2\bigr),
\end{equation}
where \(\gamma>0\) and \(L=\operatorname{diag}(\lambda_{\mathbf{k}}^{r_{\rm reg}/2})_{\mathbf{k}\in\mathcal I_N}\) with \(r_{\rm reg}>0\). The minimisation problem \eqref{eq:reg-source} is quadratic and strictly convex for \(\gamma>0\), since \(L^{\mathsf T}L\) is positive definite on the finite-dimensional coefficient space. Its unique minimiser satisfies the normal equations
\[
(\mathbf K^{\mathsf T}\mathbf K+\gamma L^{\mathsf T}L)\hat{\mathbf a}=\mathbf K^{\mathsf T}\mathbf d^{\,0}.
\]
The parameter \(\gamma\) is chosen by the discrepancy principle applied to the corrected data:
\[
\|\mathbf K\hat{\mathbf a}-\mathbf d^{\,0}\|_2\approx\tau_{\rm disc}\|\boldsymbol\eta\|_2,\qquad \tau_{\rm disc}>1.
\]

The reconstructed initial condition is \(u_0^{\rm rec}(x)=\sum_{\mathbf{k}\in\mathcal I_N}\hat a_{\mathbf{k}}\varphi_{\mathbf{k}}(x)\). Its accuracy is measured by the relative \(L^2\) error
\[
{\rm err}=\frac{\|u_0^{\rm rec}-u_0^{\rm true}\|_{L^2(\Omega_{\rm fine})}}{\|u_0^{\rm true}\|_{L^2(\Omega_{\rm fine})}}
\]
evaluated on a fine grid \(\Omega_{\rm fine}\).

\subsection{Convergence analysis}\label{sec:convergence}

We now provide a conditional convergence analysis for the known‑source reconstruction. The analysis accounts for the truncation errors in both the initial data and the source term. Assume that \(\omega\) satisfies a spectral inequality
\begin{equation}\label{eq:unified-spectral-numerics}
\|\Pi_\Lambda u\|_{L^2(\Omega)}\le C e^{C\Lambda^\beta}\sup_{x\in\omega}|\Pi_\Lambda u(x)|,\qquad \beta\in(0,s).
\end{equation}
For general thin sets \(\beta=1/2\); for the special zero‑dimensional sets one may choose any \(\beta\in(1/2,s)\). Let the true initial data satisfy \(u_0\in\dom(A^r)\) with \(\|A^r u_0\|_{L^2(\Omega)}\le M\). Denote by
\[
\Lambda_N=\max_{\mathbf{k}\in\mathcal I_N}\lambda_{\mathbf{k}}=\pi^2 n N^2\sim N^2
\]
the maximal retained eigenvalue. The truncated initial data is \(u_{0,N}=\Pi_{\Lambda_N}u_0=\sum_{\mathbf{k}\in\mathcal I_N}a_{\mathbf{k}}^{\rm true}\varphi_{\mathbf{k}}\), and the reconstructed initial data is \(u_0^{\rm rec}=\sum_{\mathbf{k}\in\mathcal I_N}\hat a_{\mathbf{k}}\varphi_{\mathbf{k}}\).

We split the total error as
\begin{equation}\label{eq:error-decomp}
\|u_0-u_0^{\rm rec}\|_{L^2(\Omega)}
\le \underbrace{\|u_0-u_{0,N}\|_{L^2(\Omega)}}_{\text{truncation error for }u_0}
+ \underbrace{\|u_{0,N}-u_0^{\rm rec}\|_{L^2(\Omega)}}_{\text{inversion error}} .
\end{equation}

Using the spectral definition of \(A^r\),
\[
\|u_0-u_{0,N}\|_{L^2(\Omega)}^2=\sum_{\lambda_{\mathbf{k}}>\Lambda_N}|\langle u_0,\varphi_{\mathbf{k}}\rangle|^2
\le \Lambda_N^{-2r}\|A^r u_0\|_{L^2(\Omega)}^2\le M^2\Lambda_N^{-2r},
\]
hence
\begin{equation}\label{eq:trunc-error}
\|u_0-u_{0,N}\|_{L^2(\Omega)}\le M\Lambda_N^{-r}\sim MN^{-2r}.
\end{equation}

Suppose additionally that the source term satisfies \(f\in L^1(0,T;\dom(A^\rho))\) for some \(\rho>0\), so that its Fourier coefficients decay as \(|f_{\mathbf{k}}(t)|\lesssim \lambda_{\mathbf{k}}^{-\rho}\) uniformly in time. The difference between the exact and truncated source data is \(\mathbf e^f = \mathbf d^f_{\rm exact} - \mathbf d^f\). By the smoothness of \(f\), the high‑frequency components of the source response satisfy
\[
\|u^f(\cdot,t)-u_N^f(\cdot,t)\|_{L^\infty(\Omega)}\le C t^{-n/(4s)}\|A^\rho f(\cdot,t)\|_{L^2(\Omega)}\Lambda_N^{-\rho}
\]
for \(t>0\). Evaluating at the discrete times and summing over the observation points yields
\[
\|\mathbf e^f\|_2 \le C_{\rm grid}^f \Lambda_N^{-\rho},
\]
where the constant depends on the time grid and the \(L^1_t\) norm of \(\|A^\rho f(\cdot,t)\|_{L^2(\Omega)}\). Thus the source truncation error is of the same order as the initial truncation error when \(\rho=r\).

Let \(\mathbf a^{\rm true}\) be the coefficient vector of \(u_{0,N}\). The corrected data can be written as
\[
\mathbf d^{\,0}=\mathbf K\mathbf a^{\rm true}+\mathbf r_N+\mathbf e^f+\boldsymbol\eta,
\]
where \(\mathbf r_N\) is the modelling error due to replacing \(u_0\) by \(u_{0,N}\) in the homogeneous part, i.e.\ \((\mathbf r_N)_{(i,j)}=[e^{-t_iA^s}(u_0-u_{0,N})](x^{(j)})\). By the discrepancy principle,
\[
\|\mathbf K\hat{\mathbf a}-\mathbf d^{\,0}\|_2\le C_\eta\|\boldsymbol\eta\|_2,
\]
and consequently
\[
\|\mathbf K(\hat{\mathbf a}-\mathbf a^{\rm true})\|_2\le (C_\eta+1)\|\boldsymbol\eta\|_2+\|\mathbf r_N\|_2+\|\mathbf e^f\|_2 .
\]

To link this discrete residual to the continuous observation error
\[
\varepsilon_N=\int_0^T\sup_{x\in\omega}|u_N^0(x,t)- u^{0,\rm rec}(x,t)|\,\diff t,
\]
where \(u_N^0(t)=e^{-tA^s}u_{0,N}\) and \( u^{0,\rm rec}(t)=e^{-tA^s}u_0^{\rm rec}\), we assume a finite‑dimensional sampling consistency condition: there exists \(C_{\rm samp}>0\) such that for every \(z_0\in X_N=\operatorname{span}\{\varphi_{\mathbf{k}}:\mathbf{k}\in\mathcal I_N\}\),
\begin{equation}\label{eq:sampling-assumption}
\int_0^T\sup_{x\in\omega}|e^{-tA^s}z_0(x)|\,\diff t
\le C_{\rm samp}\Bigl(\sum_{i=1}^{N_t}\sum_{j=1}^{M}|e^{-t_iA^s}z_0(x^{(j)})|^2\Bigr)^{1/2}.
\end{equation}
Applying it to \(z_0=u_{0,N}-u_0^{\rm rec}\) gives
\[
\varepsilon_N\le C_{\rm samp}\|\mathbf K(\mathbf a^{\rm true}-\hat{\mathbf a})\|_2\le C\bigl(\|\boldsymbol\eta\|_2+\|\mathbf r_N\|_2+\|\mathbf e^f\|_2\bigr).
\]

The modelling error \(\mathbf r_N\) is bounded via the semigroup smoothing: for \(t_i>0\),
\[
|e^{-t_iA^s}(u_0-u_{0,N})(x^{(j)})|\le C t_i^{-n/(4s)}\|u_0-u_{0,N}\|_{L^2(\Omega)},
\]
so that \(\|\mathbf r_N\|_2\le C_{\rm grid}\|u_0-u_{0,N}\|_{L^2(\Omega)}\) with \(C_{\rm grid}\) finite because \(t_i\ge\Delta t>0\). Combining these bounds with \(\|\boldsymbol\eta\|_2\le C_{\rm noise}\delta\), we obtain
\begin{equation}\label{eq:effective-eps}
\varepsilon_N\le C_{\rm data}\bigl(\delta+\Lambda_N^{-r}+\Lambda_N^{-\rho}\bigr).
\end{equation}

We now apply Theorem~\ref{thm:stability} to the homogeneous components \(u_{0,N}\) and \(u_0^{\rm rec}\). Both belong to \(X_N\subset\dom(A^r)\). We assume that \(\|A^ru_0^{\rm rec}\|_{L^2(\Omega)}\le M_{\rm rec}\). Together with \(\|A^r u_{0,N}\|_{L^2(\Omega)}\le M\), we set \(\widetilde M=M+M_{\rm rec}\). Theorem~\ref{thm:stability} yields
\[
\|u_{0,N}-u_0^{\rm rec}\|_{L^2(\Omega)}\le C\widetilde M\Bigl[\log\Bigl(e+\frac{\widetilde M}{\varepsilon_N}\Bigr)\Bigr]^{-r/s}.
\]
Inserting \eqref{eq:effective-eps} and absorbing constants gives
\begin{equation}\label{eq:inv-error-final}
\|u_{0,N}-u_0^{\rm rec}\|_{L^2(\Omega)}\le C\widetilde M\Bigl[\log\Bigl(e+\frac{\widetilde M}{\delta+\Lambda_N^{-r}+\Lambda_N^{-\rho}}\Bigr)\Bigr]^{-r/s}.
\end{equation}

Combining the error decomposition \eqref{eq:error-decomp}, the truncation estimate \eqref{eq:trunc-error}, and the inversion estimate \eqref{eq:inv-error-final}, and using \(\Lambda_N\sim N^2\), we obtain
\[
\|u_0-u_0^{\rm rec}\|_{L^2(\Omega)}
\le C_1MN^{-2r}+C_{2}\widetilde M\Bigl[\log\Bigl(e+\frac{\widetilde M}{\delta+N^{-2r}+N^{-2\rho}}\Bigr)\Bigr]^{-r/s}.
\]

If the source has the same smoothness as the initial data (\(\rho=r\)), the dominant truncation term remains \(N^{-2r}\), and the additional term \(N^{-2\rho}\) inside the logarithm does not alter the asymptotic rates. Letting \(N\to\infty\) with \(\delta>0\) fixed yields
\[
\limsup_{N\to\infty}\|u_0-u_0^{\rm rec}\|_{L^2(\Omega)}\le C_{2}\widetilde M\Bigl[\log\Bigl(e+\frac{\widetilde M}{\delta}\Bigr)\Bigr]^{-r/s}.
\]
As \(\delta\to0\), the error decays at the logarithmic rate \([\log(1/\delta)]^{-r/s}\). We summarise the discussion in a conditional convergence theorem.

\begin{theorem}[Conditional convergence]\label{thm:conv}
Let the assumptions of the convergence analysis hold: the true initial data satisfies \(\|A^r u_0\|_{L^2(\Omega)}\le M\); the source term satisfies \(f\in L^1(0,T;\dom(A^\rho))\) for some \(\rho>0\); the observation set satisfies the unified spectral inequality \eqref{eq:unified-spectral-numerics} with exponent \(\beta\in(0,s)\); the finite‑dimensional sampling consistency condition \eqref{eq:sampling-assumption} is fulfilled; and the reconstructed initial data satisfies the a priori bound \(\|A^ru_0^{\rm rec}\|_{L^2(\Omega)}\le M_{\rm rec}\). Let \(u_0^{\rm rec}\) be the reconstruction obtained from the regularised least‑squares problem \eqref{eq:reg-source} with corrected data \(\mathbf d^{\,0}=\mathbf d-\mathbf d^f\) and with the regularisation parameter chosen by the discrepancy principle. Then \(u_0^{\rm rec}\) satisfies the error estimate 
\[
\|u_0-u_0^{\rm rec}\|_{L^2(\Omega)}
\le C_1MN^{-2r}+C_{2}\widetilde M\Bigl[\log\Bigl(e+\frac{\widetilde M}{\delta+N^{-2r}+N^{-2\rho}}\Bigr)\Bigr]^{-r/s},
\]
where \(\widetilde M=M+M_{\rm rec}\), \(\delta\) is the relative noise level, and the constants \(C_1,C_{2}\) depend on \(\Omega,\omega,s,r,\rho,T,\beta\) and on the sampling grid, but not on \(\delta\). In particular, for fixed \(\delta>0\),
\[
\limsup_{N\to\infty}\|u_0-u_0^{\rm rec}\|_{L^2(\Omega)}
\le C_{2}\widetilde M\Bigl[\log\Bigl(e+\frac{\widetilde M}{\delta}\Bigr)\Bigr]^{-r/s},
\]
and as \(\delta\to0\) the error decays at the logarithmic rate \([\log(1/\delta)]^{-r/s}\).
\end{theorem}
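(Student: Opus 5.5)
The plan is to assemble Theorem~\ref{thm:conv} from the estimates already derived in the convergence analysis. I would organise it around the splitting \eqref{eq:error-decomp} into a truncation part $\|u_0-u_{0,N}\|_{L^2(\Omega)}$ and an inversion part $\|u_{0,N}-u_0^{\rm rec}\|_{L^2(\Omega)}$. The truncation part is handled directly by \eqref{eq:trunc-error}: since $\|A^r u_0\|\le M$ and $\Lambda_N=\pi^2 nN^2$, it is at most $M\Lambda_N^{-r}\le C_1MN^{-2r}$, with $C_1$ depending only on $n$ and $r$.

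For the inversion part, I would first bound the effective continuous observation error $\varepsilon_N$ of $z_0=u_{0,N}-u_0^{\rm rec}\in X_N$.
\begin{enumerate}
\item The corrected data have the form $\mathbf d^{\,0}=\mathbf K\mathbf a^{\rm true}+\mathbf r_N+\mathbf e^f+\boldsymbol\eta$.
\item The discrepancy principle gives $\|\mathbf K\hat{\mathbf a}-\mathbf d^{\,0}\|_2\le C_\eta\|\boldsymbol\eta\|_2$. The triangle inequality then yields $\|\mathbf K(\hat{\mathbf a}-\mathbf a^{\rm true})\|_2\le (C_\eta+1)\|\boldsymbol\eta\|_2+\|\mathbf r_N\|_2+\|\mathbf e^f\|_2$.
\item The sampling consistency condition \eqref{eq:sampling-assumption}, applied to $z_0$, converts this discrete residual into a bound on $\varepsilon_N$.
\item I would then bound the three terms separately:
\begin{itemize}
\item $\|\mathbf r_N\|_2\le C_{\rm grid}M\Lambda_N^{-r}$, using the ultracontractive bound \eqref{eq:ultra-beta} at the positive times $t_i\ge\Delta t$ together with \eqref{eq:trunc-error};
\item $\|\mathbf e^f\|_2\le C^f_{\rm grid}\Lambda_N^{-\rho}$, from $f\in L^1(0,T;\dom(A^\rho))$;
\item $\|\boldsymbol\eta\|_2\le C_{\rm noise}\delta$.
\end{itemize}
Together these give \eqref{eq:effective-eps}, $\varepsilon_N\le C_{\rm data}(\delta+\Lambda_N^{-r}+\Lambda_N^{-\rho})$.
\end{enumerate}

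Next I would apply Theorem~\ref{thm:stability} with $f=0$, taking initial data $u_{0,N}$ and $\widetilde u_0=u_0^{\rm rec}$. Both lie in $X_N\subset\dom(A^r)$, and $\|A^ru_{0,N}\|\le\|A^ru_0\|\le M$ because $\Pi_{\Lambda_N}$ commutes with $A^r$; adding $\|A^ru_0^{\rm rec}\|\le M_{\rm rec}$ gives the a priori bound $\widetilde M$. The stability theorem then gives
\[
\|u_{0,N}-u_0^{\rm rec}\|_{L^2(\Omega)}\le C\widetilde M\bigl[\log(e+\widetilde M/\varepsilon_N)\bigr]^{-r/s}.
\]
This function is nondecreasing in $\varepsilon_N$, so we may insert the upper bound for $\varepsilon_N$. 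The constant $C_{\rm data}$ inside the logarithm can be removed using the elementary inequality
\[
\log(e+x/c)\ge c'\log(e+x)\qquad\text{for all }x\ge0,
\]
where $c'>0$ depends only on $c$ (this holds for $c\ge 1$, and trivially with $c'=1$ if $c<1$). Replacing $\Lambda_N^{-r}$ and $\Lambda_N^{-\rho}$ by $N^{-2r}$ and $N^{-2\rho}$ up to constants gives \eqref{eq:inv-error-final}. Adding the two parts yields the main estimate.

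The $\limsup$ statement follows by letting $N\to\infty$. Then $N^{-2r}\to0$ and $N^{-2\rho}\to0$, and the logarithmic term is continuous and monotone in its denominator. Note that $C_2$ must be independent of $N$, which I would check: the stability constant depends only on $T,\Omega,\omega,s,\beta,r$. The decay $[\log(1/\delta)]^{-r/s}$ as $\delta\to0$ follows because $\log(e+\widetilde M/\delta)\sim\log(1/\delta)$.

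The main obstacle is uniformity of the constants in $N$. The bounds on $\|\mathbf r_N\|_2$ and $\|\mathbf e^f\|_2$ involve sums over the sampling grid and the factor $t_i^{-n/(4s)}$, so they depend on $\Delta t$ and $M_x$. Since the theorem allows dependence on the sampling grid, I would fix the grid and state this dependence explicitly. The only other delicate point is that $C_{\rm samp}$ must be independent of $N$; this is part of the hypothesis \eqref{eq:sampling-assumption}, so I would state it as assumed rather than prove it.
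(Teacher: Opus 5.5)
Your proposal is correct and follows the paper's argument essentially step for step. It uses the splitting \eqref{eq:error-decomp}, then bounds $\varepsilon_N$ as in \eqref{eq:effective-eps} via the discrepancy principle and the sampling consistency condition, and then applies Theorem~\ref{thm:stability} to $u_{0,N}$ and $u_0^{\rm rec}$ with $\widetilde M=M+M_{\rm rec}$. Your monotonicity remark and the inequality $\log(e+x/c)\ge c'\log(e+x)$ make explicit the ``absorbing constants'' step the paper leaves implicit, and your caveat that $C_{\rm samp}$ must be uniform in $N$ for the $\limsup$ is a hypothesis the paper also uses tacitly.
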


\subsection{Numerical reconstruction}

In the subsection, we present numerical reconstructions of initial data for the fractional heat equation on the interval \(\Omega=(0,1)\) (one‑dimensional) and on the square \(\Omega=(0,1)^2\) (two‑dimensional). In all experiments the observation points are chosen as finite subsets of the special zero‑dimensional observation sets described in Lemma~\ref{lem:spectral-special}. The theoretical spectral inequalities for such sets are proved for the full infinite constructions (for example, the whole sequence \(\{i^{-\alpha}\}_{i\ge1}\)); in the computations we naturally use only finitely many sensors, so the finite sets below should be regarded as practical approximations of the corresponding theoretical observation sets.

\subsubsection{One-dimensional }\label{sec:num1d}

We set \(\Omega=(0,1)\), truncate the expansion to \(N=30\) modes, and take \(s=0.8\), \(T=1\), \(N_{\rm int}=100\) and \(N_t=50\). According to Lemma~\ref{lem:spectral-special}, a one-dimensional special observation set may be taken as a rapidly accumulating sequence or an algebraic irrational point. We test two finite samplings:
\begin{itemize}
\item \textbf{Configuration A} (pure accumulation sequence): \(\omega= \bigl\{1,\frac12,\frac13,\frac14,\frac15,\frac16\bigr\}\);
\item \textbf{Configuration B} (single algebraic irrational point): \(\omega= \bigl\{\sqrt{2}-1\bigr\}\).
\end{itemize}
Both are finite subsets of the theoretically admissible set \(\omega_\alpha\cup\{x_0\}\) with \(\alpha=1\) and \(x_0=\sqrt{2}-1\). Configuration A samples the rapidly accumulating harmonic sequence, while Configuration B consists only of the algebraic irrational point.

The known source term is chosen as \(f(x,t)=\exp(-t)\,x\,(1-x)\). We form the truncated source data \(\mathbf d^f\) and the corrected data \(\mathbf d^{\,0}=\mathbf d-\mathbf d^f\) as described in the algorithm. The relative noise level is \(\delta=1\%\), and the regularisation uses \(r_{\rm reg}=2\) with \(L=\operatorname{diag}(\lambda_k)\), \(\lambda_k=(k\pi)^2\), and the regularisation parameter is selected by the discrepancy principle applied to the corrected data.

Two true initial conditions are tested:
\begin{example}\label{ex1}
Sine mode: \(u_0(x)=\sin(\pi x)\).
\end{example}
\begin{example}\label{ex2}
Gaussian: \(u_0(x)=\exp(-10(x-0.5)^2)\).
\end{example}

\begin{figure}[h!]\centering
\subfigure[\(\omega= \bigl\{1,\frac12,\frac13,\frac14,\frac15,\frac16\bigr\}\)]{\includegraphics[width=0.46\textwidth]{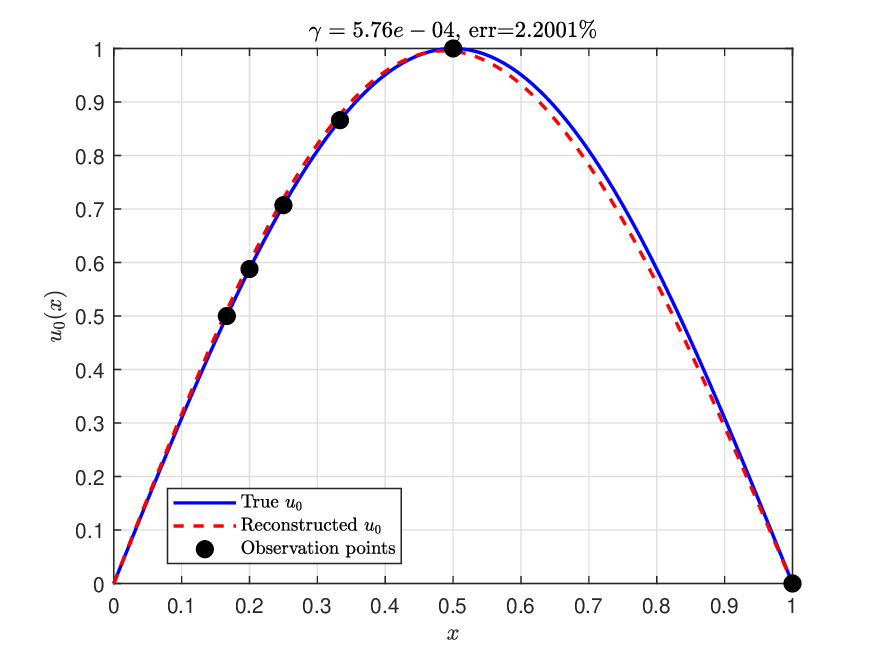}}
\subfigure[\(\omega=\{\sqrt{2}-1\}\)]{\includegraphics[width=0.46\textwidth]{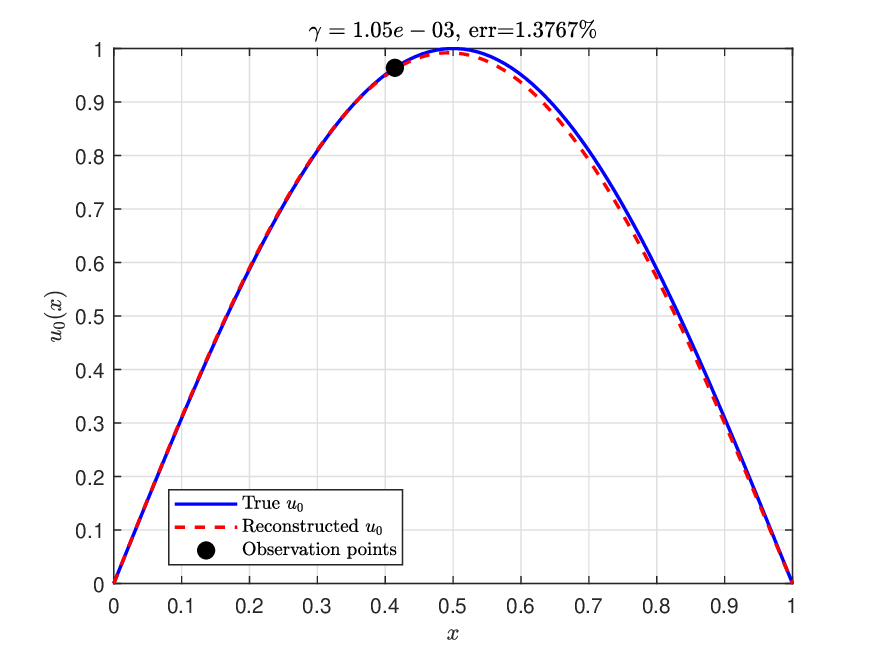}}
\caption{Comparison of the true solution and the reconstructed solution for Example \ref{ex1}.}\label{fig1}
\end{figure}

\begin{figure}[h!]\centering
\subfigure[\(\omega= \bigl\{1,\frac12,\frac13,\frac14,\frac15,\frac16\bigr\}\)]
{\includegraphics[width=0.46\textwidth]{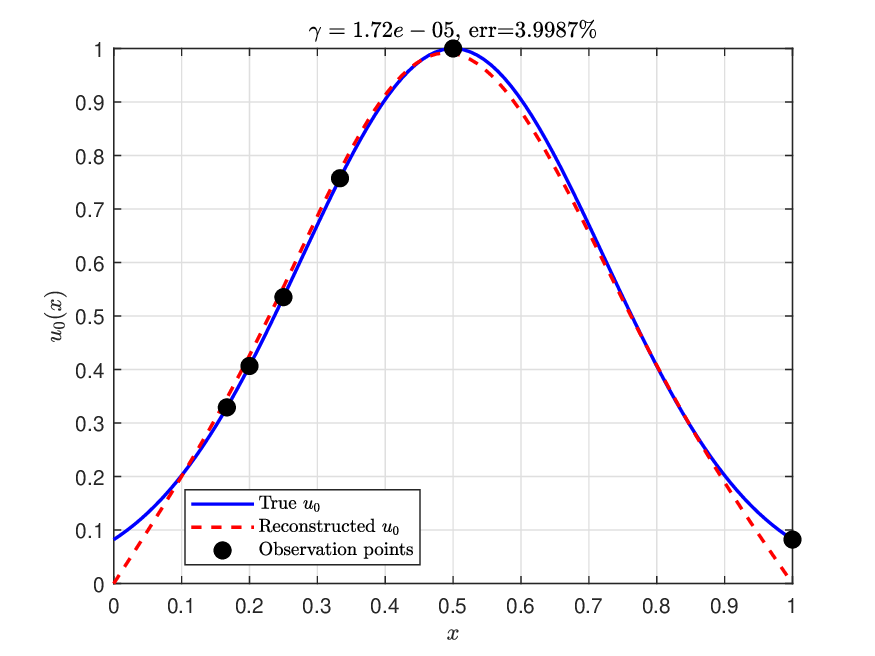}}
\subfigure[\(\omega=\{\sqrt{2}-1\}\)]{\includegraphics[width=0.46\textwidth]{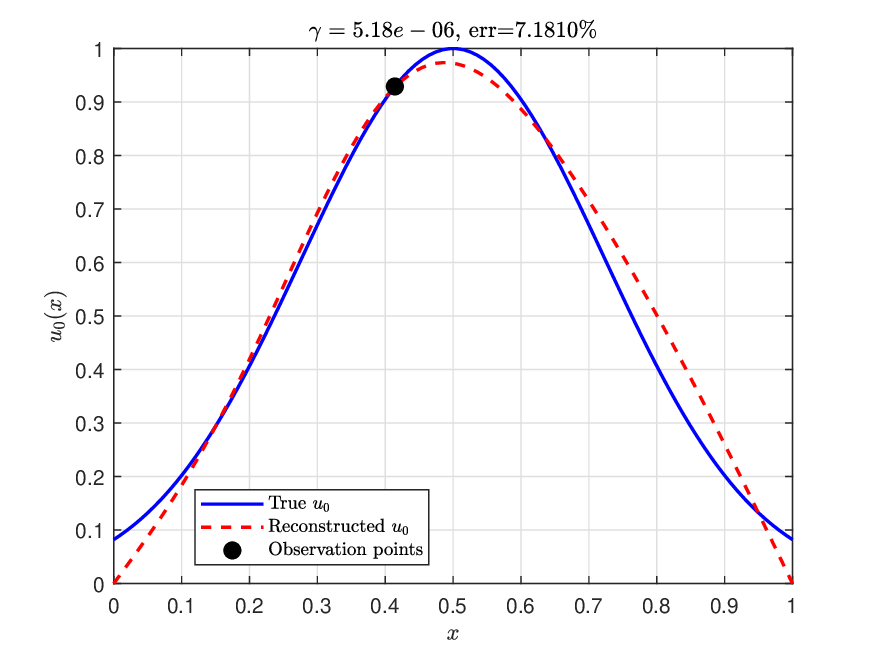}}
\caption{Comparison of the true solution and the reconstructed solution for Example \ref{ex2}.}
\label{fig2}
\end{figure}

The reconstruction results for the one-dimensional case are shown in Figures~\ref{fig1}--\ref{fig2}. Both configurations yield accurate recovery of the initial data. 
Configuration~A, which uses five rapidly accumulating points, produces a relative error of \(2.2001\%\) for the Sine mode and \(3.9987\%\) for the Gaussian profile. 
Configuration~B, which consists of the single algebraic irrational point \(\sqrt{2}-1\), gives errors of \(1.3767\%\) and \(7.1810\%\) respectively. 
These results confirm that the arithmetic nature of the observation point, as required by the Remez-type inequality underlying Lemma~\ref{lem:spectral-special}, plays a crucial role in the inversion.

A somewhat counterintuitive phenomenon emerges when comparing the two configurations. 
Although Configuration~A provides more spatial information, its advantage over the single-point observation in Configuration~B is rather modest. 
Indeed, for the Gaussian initial condition, the error of Configuration~B remains within a few percent despite using only one sensor. 
We suspect that the accumulation sequence, while increasing the amount of measured data, also introduces additional noise-contaminated channels that can slightly degrade the effective signal-to-noise ratio per degree of freedom, thereby partially offsetting the benefit of richer spatial sampling.

Compared with classical observations on nonempty open subsets, both configurations require dramatically fewer measurement points while still achieving high reconstruction accuracy. 
This is of considerable practical interest, as it demonstrates that accurate initial data recovery for fractional diffusion processes is feasible with extremely sparse sensor placements, potentially leading to significant cost reductions in applications where dense spatial measurements are impractical or expensive.

\subsubsection{Two-dimensional }\label{sec:num2d}

We take \(\Omega=(0,1)^2\), \(N=12\) modes per direction (144 unknowns), \(s=0.8\), \(T=1\), \(N_{\rm int}=30\) and \(N_t=50\). Lemma~\ref{lem:spectral-special} provides two admissible constructions in two dimensions: the Cartesian product \(\omega_\alpha\times\omega_\alpha\) and the mixed product \(\{x_0\}\times\omega_\alpha\). We test finite samplings of both. Let
\[
\omega_1= \Bigl\{1,\frac12,\,\frac13,\,\frac14,\,\frac15,\,\frac16\Bigr\},\qquad x_0={\sqrt{2}-1}.
\]
The two configurations are:
\begin{itemize}
\item \textbf{Configuration C} (Cartesian product): \(\omega= \omega_1\times\omega_1\) with \(|\omega|=36\) points.
\item \textbf{Configuration D} (mixed product): \(\omega= \{x_0\}\times\omega_1\) with \(|\omega|=6\) points.
\end{itemize}

The source term is chosen similarly as \(f(x,y,t)=\exp(-t)\,x(1-x)\,y(1-y)\). The truncated source data \(\mathbf d^f\) are computed, and the inversion is performed on the corrected data \(\mathbf d^{\,0}\). The relative noise level is \(\delta=1\%\), and the regularisation uses \(r_{\rm reg}=2\) with \(L=\operatorname{diag}(\lambda_{k_1,k_2})\), \(\lambda_{k_1,k_2}=\pi^2(k_1^2+k_2^2)\).

Two initial conditions are tested:

\begin{example}\label{ex3}
Sine mode: \(u_0(x,y)=\sin(\pi x)\sin(2\pi y)\).
\end{example}

\begin{example}\label{ex4}
Gaussian: \(u_0(x,y)=\exp(-10((x-0.5)^2+(y-0.5)^2))\).
\end{example}

\begin{figure}[h!]\centering
\subfigure[True solution $u_0$]{\includegraphics[width=0.32\textwidth]{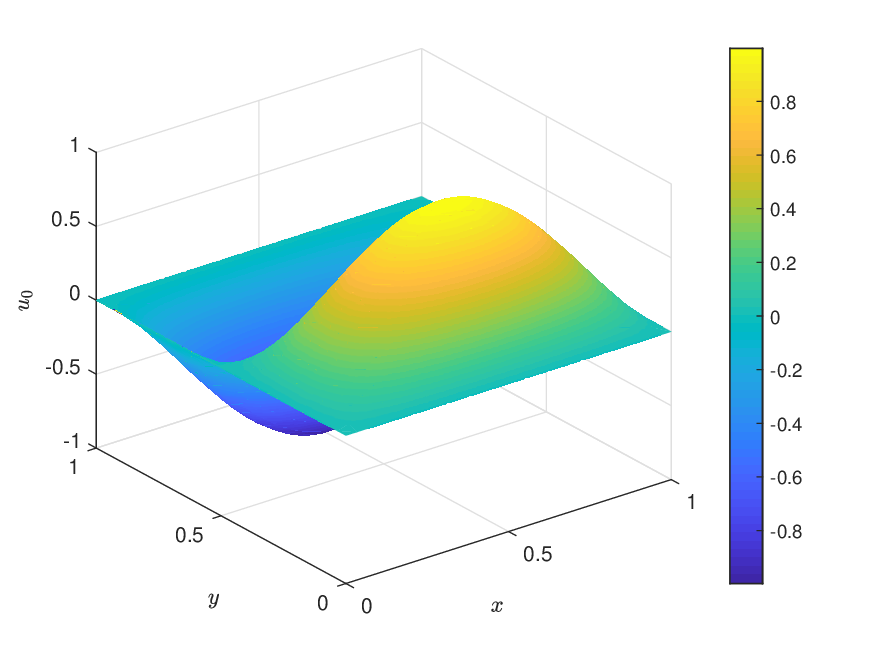}}
\subfigure[Reconstructed solution $u_0^{\rm rec}$]{\includegraphics[width=0.32\textwidth]{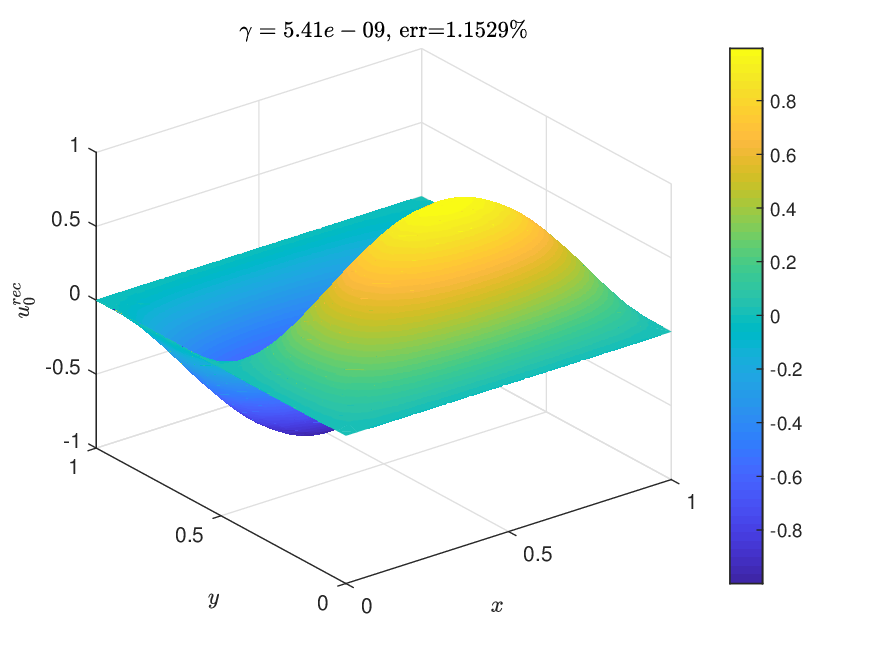}}
\subfigure[Reconstructed solution $u_0^{\rm rec}$]{\includegraphics[width=0.32\textwidth]{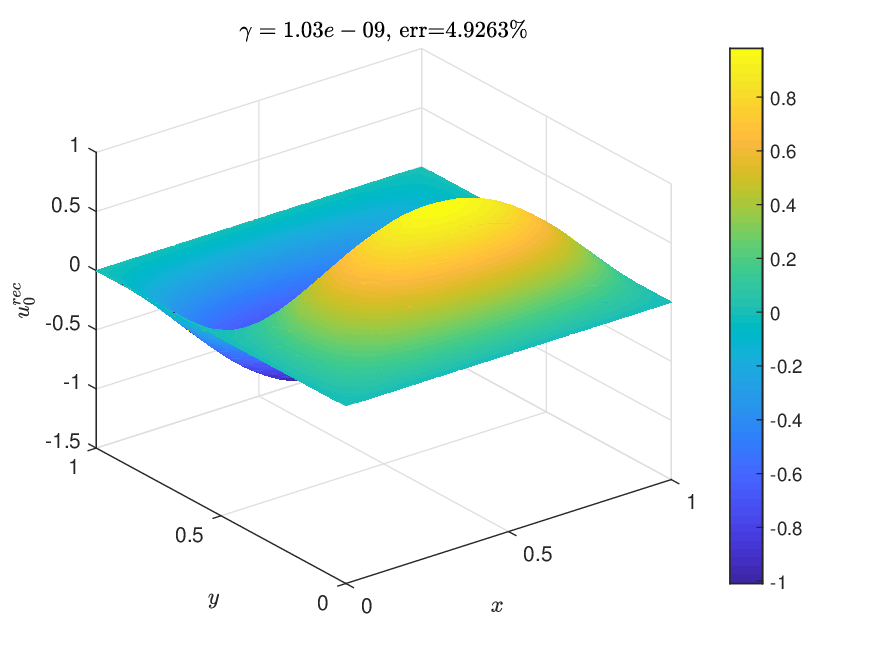}}
\caption{Comparison of the true solution and the reconstructed solution of Example \ref{ex3} for $\omega=\omega_{1}\times\omega_{1}$(the second column) and $\omega=\{x_0\}\times\omega_{1}$(the third column).}\label{fig3}
\end{figure}

\begin{figure}[h!]\centering
\subfigure[True solution $u_0$]{\includegraphics[width=0.32\textwidth]{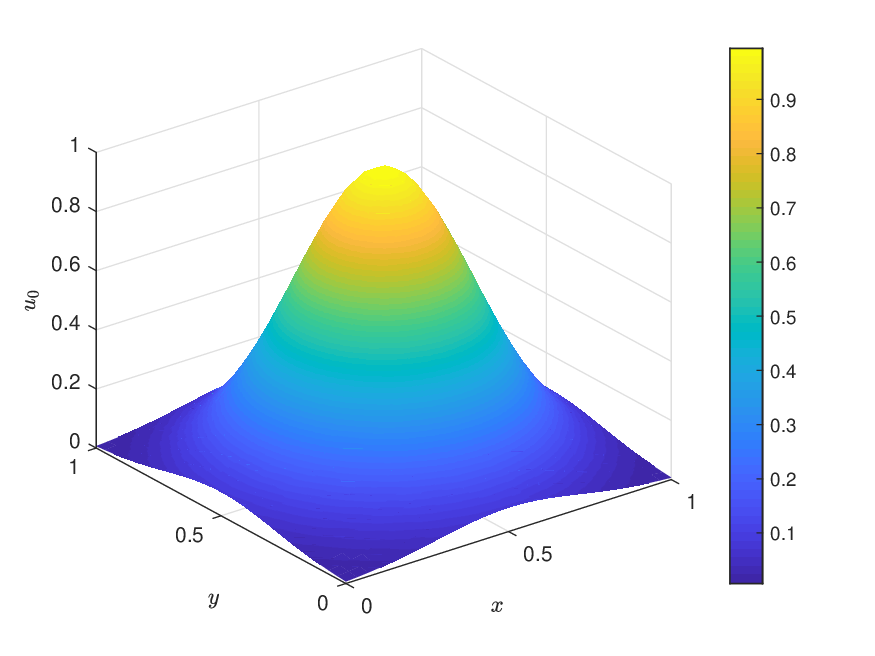}}
\subfigure[Reconstructed solution $u_0^{\rm rec}$]{\includegraphics[width=0.32\textwidth]{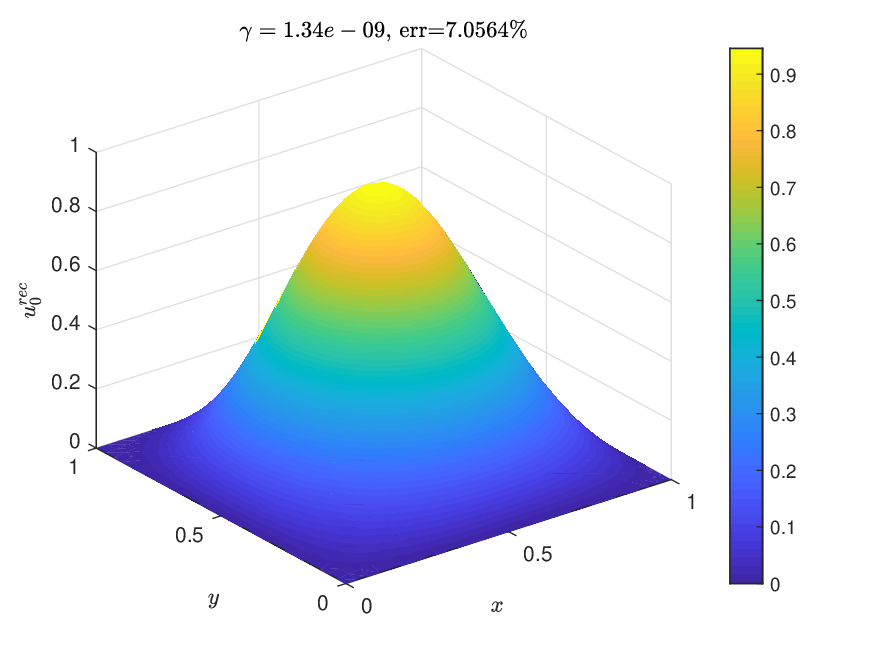}}
\subfigure[Reconstructed solution $u_0^{\rm rec}$]{\includegraphics[width=0.32\textwidth]{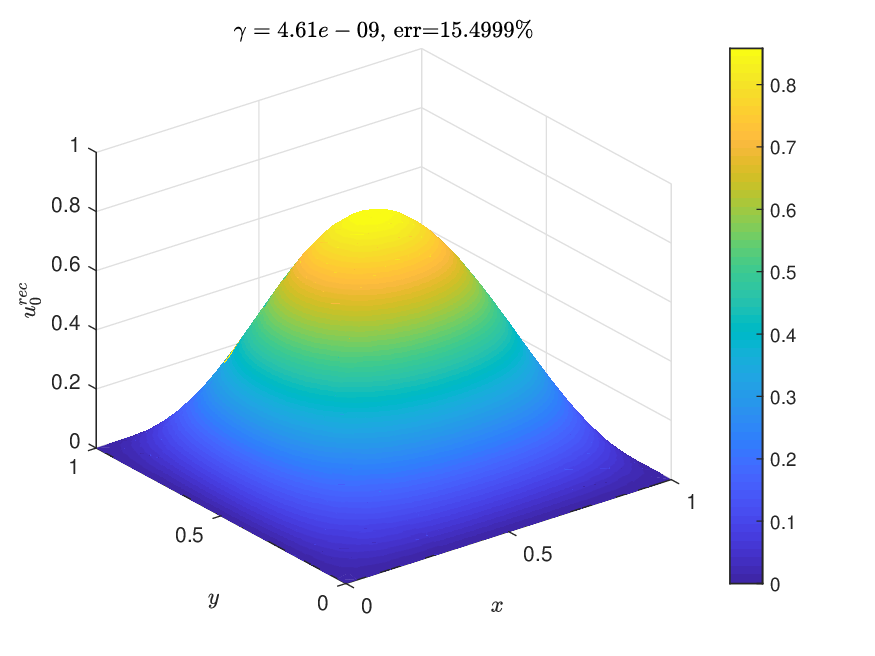}}
\caption{Comparison of the true solution and the reconstructed solution of Example \ref{ex4} for $\omega=\omega_{1}\times\omega_{1}$(the second column) and $\omega=\{x_0\}\times\omega_{1}$(the third column).}\label{fig4}
\end{figure}

The reconstruction results for the two-dimensional case are illustrated in Figures~\ref{fig3}--\ref{fig4}. Configuration~C, consisting of the Cartesian product \(\omega_1\times\omega_1\) with \(36\) observation points, yields excellent accuracy for all two initial conditions, with relative \(L^2\) errors of \(1.1529\%\) (Sine mode) and \(7.0564\%\) (Gaussian). Configuration~D, the mixed product \(\{x_0\}\times\omega_1\) with only \(6\) points, still produces acceptable reconstructions, with errors of  \(4.9263\%\) and \(15.4999\%\) respectively. The slightly larger errors for the Gaussian profile are expected because its broader spectral content makes it more sensitive to noise and truncation. The effect of the source truncation is negligible owing to the high smoothness of the chosen source, confirming that the algorithm remains robust. These results demonstrate that both admissible constructions of the special zero-dimensional observation sets are viable for stable recovery in two dimensions.

Similar to the one‑dimensional case, the initial data in two dimensions can be accurately recovered using only a very small number of observation points. The reduction of the required data is even more pronounced here: while an interior open set in two dimensions would typically require measurements over a positive‑area region, which entails a large number of sensors if one discretises uniformly, Configurations~C and~D use only \(36\) and \(6\) pointwise sensors, respectively. Moreover, as the spatial dimension \(n\) grows, this structural advantage becomes increasingly significant. To resolve spatial scales up to a given mesh size \(h>0\), a uniform sensor grid covering a small interior open set in \(n\) dimensions requires on the order of \(h^{-n}\) sensors. In contrast, our approach uses only \(M_x^n\) points in the Cartesian product case or as few as \(M_x^{\,n-1}\) points in the mixed product case, where \(M_x\) is the number of special points per direction. To make this comparison concrete, suppose one wishes to achieve a spatial resolution of \(h=10^{-2}\) in dimension \(n=3\). A uniform sensor grid covering an interior open set would then require roughly \(h^{-3} = 10^6\) sensors. By contrast, taking \(M_x=6\) special points per direction as in our one‑dimensional experiments, the Cartesian product construction uses \(M_x^3 = 216\) points, while the mixed construction uses only \(M_x^{2}=36\) points. This represents a reduction by four to five orders of magnitude compared with the classical open‑set approach. Consequently, the thin‑set observation strategy offers a drastic reduction in data acquisition costs in higher‑dimensional applications while still providing reliable initial data recovery.

\section{Concluding remarks}\label{sec6}

In this paper, we extended the thin‑set observability theory from the classical heat equation to the fractional heat equation
\[
\partial_t u+(-\Delta)^s u=0,\qquad s\in(1/2,1].
\]
Based on this observability inequality, we studied the inverse problem for the nonhomogeneous equation \(\partial_t u+(-\Delta)^s u=f(x,t)\) with a known source term \(f\), where observations are taken on thin sets. By subtracting the computable Duhamel contribution of \(f\) from the measurements, the recovery of the initial data reduces to the homogeneous case. Under an a priori bound \(\|A^r u_0\|_{L^2(\Omega)}\le M\), we proved uniqueness and logarithmic stability for the recovered initial datum from thin‑set observations, with the logarithmic rate \([\log(e+M/\varepsilon)]^{-r/s}\). This rate reflects the interplay between the backward amplification \(e^{T\Lambda^s}\) of low frequencies and the high‑frequency tail controlled by \(M\Lambda^{-r}\). The geometric exponent \(\beta\) affects only the multiplicative constant through the observability cost, not the logarithmic power.

Numerical experiments in one and two space dimensions confirm that stable recovery from very few, carefully chosen observation points is feasible. A regularised least‑squares algorithm with a spectral penalty on the higher eigenmodes was employed, and a conditional convergence analysis was provided, showing that the reconstruction error is bounded by the sum of a spectral truncation term and a logarithmic term dictated by the continuous stability estimate, and that the error decays logarithmically as the noise level tends to zero. The simulations demonstrate that, as long as the observation set satisfies a suitable spectral inequality, low‑dimensional or structured discrete observations can provide enough information for accurate reconstruction.

\bigskip

\noindent{\bf Acknowledgments.} This work was partially supported by the National Natural Science Foundation of China
(No. 12271277) and the Open Research Fund of Key Laboratory of Nonlinear Analysis \&
Applications (Central China Normal University), Ministry of Education, China. The second author also thanks Ningbo Youth Leading Talent Project (No.2024QL045).


\begin{thebibliography}{99}


\bibitem{AliAziz18}
M. Ali, S. Aziz, S. A. Malik. Inverse source problem for a space-time fractional diffusion equation, {\it Fract. Calc. Appl. Anal.} {\bf 21}(3) (2018): 844-863.

\bibitem{Biccari18}
U. Biccari, {Boundary controllability for a one-dimensional heat equation with a singular inverse-square potential}, {\it Math. Control Relat. Fields} {\bf 9} (1) (2018): 191--219.

\bibitem{BiccariWarmaZuazua18}
U. Biccari, M. Warma, E. Zuazua, {Local regularity for fractional heat equations}, in {\it Recent Advances in PDEs: Analysis, Numerics and Control: In Honor of Prof. Fern\'andez-Cara's 60th Birthday}, Springer, 2018, pp. 233--249.

\bibitem{Bourgeois17}
L. Bourgeois, { Quantification of the unique continuation property for the heat equation}, {\it Math. Control Relat. Fields} {\bf 7} (3) (2017): 347--367.

\bibitem{BucurValdinoci16}
C. Bucur, E. Valdinoci, {\it Nonlocal Diffusion and Applications}, Springer, 2016.

\bibitem{CaffarelliSilvestre07}
L. Caffarelli, L. Silvestre, {An extension problem related to the fractional Laplacian}, {\it Comm. Partial Differential Equations} {\bf 32} (8) (2007): 1245--1260.

\bibitem{CapellaDavilaDupaigne11}
A. Capella, J. D\'avila, L. Dupaigne, Y. Sire, {Regularity of radial extremal solutions for some non-local semilinear equations}, {\it Comm. Pure Appl. Math.} {\bf 36} (8) (2011): 1353--1384..

\bibitem{Coron07}
J.-M. Coron, {\it Control and Nonlinearity}, Amer. Math. Soc., 2007.

\bibitem{DagerZuazua06}
R. Dager, E. Zuazua, {\it Wave Propagation, Observation and Control in $1\text{-}d$ Flexible Multi-Structures}, Springer, 2006.

\bibitem{GarciaTakahashi11}
G. Garc\'ia, T. Takahashi, {\it Inverse problem and null-controllability for parabolic systems}, {\it J. Inverse Ill-Posed Probl.} {\bf 19} (2011): 379--405.

\bibitem{Green25}
A. W. Green, K. Le Balc'h, J. Martin, M. A. Orsoni, {On the dimension of observable sets for the heat equation}, {\it Pure Appl. Anal.} {\bf 7} (3) (2025): 639--660.

\bibitem{HuangWang26}
S. Huang, G. Wang, M. Wang, {Observability Inequality, Log-Type Hausdorff Content and Heat Equations}, {\it Comm. Math. Phys.} {\bf 407} (2) (2026): 33.

\bibitem{Isakov06}
V. Isakov, {\it Inverse Problems for Partial Differential Equations}, 2nd ed., Springer, 2006.

\bibitem{JinRundell15}
B. Jin, W. Rundell, {A tutorial on inverse problems for anomalous diffusion processes}, {\it Inverse Problems} {\bf 31} (3) (2015): 035003.

\bibitem{John60}
F. John, {Continuous dependence on data for solutions of partial differential equations with a prescribed bound}, {\it Comm. Pure Appl. Math.} {\bf 13} (4) (1960): 551--585.

\bibitem{Klibanov06}
M. V. Klibanov, { Estimates of initial conditions of parabolic equations and inequalities via lateral Cauchy data}, {\it Inverse Problems} {\bf 22} (2) (2006): 495--514.

\bibitem{KlibanovTikhonravov07}
M. V. Klibanov, A. V. Tikhonravov, {Estimates of initial conditions of parabolic equations and inequalities in infinite domains via lateral Cauchy data}, {\it J. Differential Equations} {\bf 237} (1) (2007): 198--224.

\bibitem{Koenig20}
A. Koenig, Lack of null-controllability for the fractional heat equation and related equations, {\it SIAM J. Control Optim.} {\bf 58} (6) (2020): 3130--3160.

\bibitem{LebeauRobbiano95}
G. Lebeau, L. Robbiano, {Contr\^ole exact de l'\'equation de la chaleur}, {\it Comm. Partial Differential Equations} {\bf 20} (1--2) (1995): 335--356.


\bibitem{LebeauZuazua98}
G. Lebeau, E. Zuazua, {Null-controllability of a system of linear thermoelasticity}, {\it Arch. Ration. Mech. Anal.} {\bf 141} (4) (1998): 297--329.

\bibitem{LiYamamotoZou08}
J. Li, M. Yamamoto, J. Zou, {Conditional stability and numerical reconstruction of initial temperature}, {\it Commun. Pure Appl. Anal.} {\bf 8} (1) (2008): 361--382.

\bibitem{LiWei18}
Y. S. Li, T. Wei, {An inverse time-dependent source problem for a time–space fractional diffusion equation}, {\it Appl. Math. Comput.} {\bf 336} (2018): 257--271.

\bibitem{LiLiuYamamoto17}
Z. Li, J. Liu, M. Yamamoto, {Inverse problems of determining parameters of the fractional partial differential equations}, in {\it Handbook of Fractional Calculus with Applications}, {\bf 2} 2019: 411--430.

\bibitem{LinRailo25}
Y.-H. Lin, J. Railo, P. Zimmermann, The Calder\'on problem for a nonlocal diffusion equation with time-dependent coefficients, {\it Rev. Mat. Iberoam.} {\bf 41} (3) (2025): 1129--1172.

\bibitem{LuZuazua16}
Q. L\"u, E. Zuazua, {On the lack of controllability of fractional in time ODE and PDE}, {\it Math. Control Signals Systems} {\bf 28} (2) (2016): 10.

\bibitem{Mainardi22}
F. Mainardi, {\it Fractional calculus and waves in linear viscoelasticity: an introduction to mathematical models}, World Scientific, 2022.

\bibitem{MetzlerKlafter00}
R. Metzler, J. Klafter, {The random walk's guide to anomalous diffusion: a fractional dynamics approach}, {\it Phys. Rep.} {\bf 339} (1) (2000): 1--77.

\bibitem{MicuZuazua06}
S. Micu, E. Zuazua, {On the controllability of a fractional order parabolic equation}, {\it SIAM J. Control Optim.} {\bf 44} (6) (2006): 1950--1972.

\bibitem{Miranker61}
W. L. Miranker, {A well-posed problem for the backward heat equation}, {\it Proc. Amer. Math. Soc.} {\bf 12} (2) (1961): 243--247.

\bibitem{PhungWang13}
K. D. Phung, G. Wang, {An observability estimate for parabolic equations from a measurable set in time and its applications}, {\it J. Eur. Math. Soc.} {\bf 15} (2) (2013): 681--703.

\bibitem{RosOtonSerra14}
X. Ros-Oton, J. Serra, {The Dirichlet problem for the fractional Laplacian: regularity up to the boundary}, {\it J. Math. Pures Appl.} {\bf 101} (3) (2014): 275--302.

\bibitem{Russell78}
D. L. Russell, {Controllability and stabilizability theory for linear partial differential equations: recent progress and open problems}, {\it SIAM Rev.} {\bf 20} (4) (1978): 639--739.

\bibitem{SakamotoYamamoto11}
K. Sakamoto, M. Yamamoto, Initial value/boundary value problems for fractional diffusion-wave equations and applications to some inverse problems, {\it J. Math. Anal. Appl.}, {\bf 382 }(1) (2011): 426--447.

\bibitem{TatarUlusoy15}
S. Tatar, S. Ulusoy, An inverse source problem for a one-dimensional space-time fractional diffusion equation, {\it Appl. Anal.}, {\bf 94} (11) (2015): 2233--2244.

\bibitem{TucsnakWeiss09}
M. Tucsnak, G. Weiss, {\it Observation and Control for Operator Semigroups}, Birkh\"auser, 2009.

\bibitem{ZhangJia18}
Y.-X. Zhang, J.X. Jia, L. Yan, Bayesian approach to a nonlinear inverse problem for a time-space fractional diffusion equation. {\it Inverse Problems} {\bf 34} (12) (2018): 125002.

\bibitem{ZhaoLiu14}
J. J. Zhao, S. S. Liu, T. Liu, An inverse problem for space‐fractional backward diffusion problem, {\it Math. Methods Appl. Sci.} {\bf 37}(8) (2014): 1147-1158.

\bibitem{ZhengZhang18}
G. H. Zheng, Q. G. Zhang. Solving the backward problem for space-fractional diffusion equation by a fractional Tikhonov regularization method, {\it Math. Comput. Simul.} {\bf 148} (2018): 37-47.

\bibitem{Zuazua07}
E. Zuazua, {Controllability and observability of partial differential equations: some results and open problems}, in {\it Handbook of differential equations: evolutionary equations}, Vol. 3, North-Holland, 2007, pp. 527--621.

\end{thebibliography}
\end{document}